\documentclass{article}
\usepackage{arxiv}

\usepackage[utf8]{inputenc}
\usepackage[T1]{fontenc}
\usepackage{hyperref}
\hypersetup{
  colorlinks=true,
  linkcolor=blue,
  citecolor=blue,
  urlcolor=blue,
  pdftitle={Entropy Geometry and Augmented Mobility for Reactive Maxwell--Stefan Membrane Transport with Finite Occupancy},
  pdfauthor={E. Yu. Shchetinin and A. A. Shevchuk and S. I. Salpagarov}
}
\usepackage{url}
\usepackage{amsmath,amssymb,amsthm,amsfonts}
\usepackage{nicefrac}
\usepackage{microtype}
\usepackage{graphicx}

\newtheorem{theorem}{Theorem}[section]
\newtheorem{lemma}[theorem]{Lemma}
\newtheorem{corollary}[theorem]{Corollary}
\newtheorem{proposition}[theorem]{Proposition}
\newtheorem{definition}[theorem]{Definition}
\newtheorem{remark}[theorem]{Remark}
\newtheorem{assumption}[theorem]{Assumption}

\newcommand{\R}{\mathbb{R}}
\newcommand{\bone}{\mathbf{1}}
\newcommand{\D}{\mathcal{D}}
\newcommand{\Be}{\widetilde{B}}       
\newcommand{\HH}{\mathcal{H}}         
\newcommand{\Pc}{P_c}                 
\newcommand{\Pm}{P_m}                 
\newcommand{\Hc}{H_c}                 
\newcommand{\norm}[1]{\lVert #1\rVert}

\newcommand{\subjclass}[2][2020]{%
  \par\addvspace\medskipamount
  {\small\textit{2020 Mathematics Subject Classification.} #2\par}}

\title{Entropy Geometry and Augmented Mobility for
       Reactive Maxwell--Stefan Membrane Transport with Finite Occupancy}

\author{%
  E.\,Yu.~Shchetinin \\
  Sevastopol State University \\
  Sevastopol, Russia \\
  \texttt{riviera-molto@mail.ru}
  \And
  A.\,A.~Shevchuk \\
  Sevastopol State University \\
  Sevastopol, Russia \\
  \texttt{andreiluck11@yandex.ru}
  \AND
  S.\,I.~Salpagarov \\
  RUDN University \\
  Moscow, Russia \\
  \texttt{salpagarov-si@rudn.ru}
}

\begin{document}
\maketitle

\begin{abstract}
We study a reactive Maxwell--Stefan-type membrane transport system under
a finite-occupancy constraint with explicit vacancies. The admissible state
space is
$\D = \{u \in (0,1)^n : \varrho(u) := \sum_{i=1}^n u_i < 1\}$,
where the vacancy fraction $1-\varrho(u)$ represents the local free volume.
This bounded-occupancy geometry induces a Boltzmann--Fermi entropy and a
global parametrization by entropy variables. The associated mobility is
assumed to split into a composition channel, corresponding to redistribution
at fixed total occupancy, and a mass channel, corresponding to variation of
the filling fraction. The main structural difficulty is that the unaugmented
mobility may lose coercivity in the mass channel.

We show that a single rank-one augmentation of the form $\gamma_0\bone\otimes\bone$
restores full coercivity while leaving the composition block unchanged. On
this basis, we prove four results: a quantitative channel-wise coercivity
estimate for the augmented mobility; global existence of entropy weak
solutions via an implicit Rothe scheme in entropy variables; a weak--strong
stability estimate in relative entropy with uniqueness in the strong class;
and convergence of a fully implicit finite-volume approximation that
preserves the bounded-occupancy structure and satisfies a discrete entropy
inequality.
\end{abstract}

\keywords{Maxwell--Stefan transport \and finite occupancy \and entropy
  structure \and weak solutions \and weak--strong stability \and
  finite-volume method}

\subjclass[2020]{35K51, 35Q35, 65M08}

\section{Introduction}
\label{sec:intro}

Maxwell--Stefan transport models provide a natural continuum framework
for multicomponent diffusion in constrained media~\cite{Maxwell1866,
Krishna1997,Giovangigli1999,Bothe2011}. In membrane settings, however,
the relevant state space is often not the classical simplex of dilute
mixtures but a bounded-occupancy domain in which each mobile species
occupies a fraction of a finite local capacity and the residual fraction
corresponds to vacancies or free volume. This leads to state constraints
of the form
\[
  u_i \ge 0,\quad \varrho(u) := \sum_{i=1}^n u_i < 1,
\]
with vacancy fraction $1-\varrho(u)$.

This finite-occupancy regime changes the thermodynamic structure of the
problem. The natural entropy is of Boltzmann--Fermi type, and the
associated entropy variables are
\[
  w_i = \log u_i - \log(1-\varrho(u)).
\]
They provide a global parametrization of the admissible state space by
unconstrained coordinates. At the same time, finite capacity distinguishes
between redistribution among species at fixed total occupancy and
collective variation of the filling fraction. This yields a natural
splitting of the species space into a composition channel and a mass
channel~\cite{Bothe2011,BoetheDreyer2015,Jungel2015}.

The main analytical difficulty addressed in this paper is that the
mobility may be coercive on the composition subspace while failing to
control the total-occupancy direction. As a consequence, the entropy
method does not close at the level of the full gradient of the entropy
variables. Our central observation is that this defect can be repaired
by a single rank-one augmentation,
\[
  \gamma_0\,\bone\otimes\bone,\quad \gamma_0 > 0,
\]
which acts only in the mass channel and leaves the composition block
unchanged. This produces full coercivity in the entropy metric induced
by the bounded-occupancy geometry. The analytical background for this
viewpoint comes from the entropy-based treatment of cross-diffusion
systems and from existence theory for Maxwell--Stefan-type
models~\cite{JungelStelzer2013,Jungel2015,Jungel2017}.

The contribution of the paper is fourfold. First, we establish a
quantitative channel-wise coercivity estimate for the augmented mobility.
Second, we use this estimate to construct global entropy weak solutions
by an implicit Rothe scheme in entropy variables. Third, we derive a
weak--strong relative entropy estimate and infer uniqueness in the strong
class. Fourth, we show that the same structural principle can be realized
at the discrete level by a fully implicit finite-volume scheme that
preserves the admissible state space, satisfies a discrete entropy
inequality, and converges to entropy weak solutions. The discrete part is
formulated within the standard finite-volume framework for nonlinear
diffusion equations~\cite{Eymard2000,EymardGallouetHerbich2002,
BessemoulinFilbet2012}.

The present model should be understood as a finite-occupancy membrane
prototype. Its purpose is not to provide the most detailed constitutive
closure for all Maxwell--Stefan friction mechanisms arising in concrete
membrane systems, but to isolate the minimal structural core that remains
analytically stable under bounded occupancy, explicit vacancies, reactive
conversion, and irreversible sink terms. In this sense, the paper
identifies a rigorous continuum-and-discrete entropy theory for a class
of finite-capacity reactive membrane transport models.

\paragraph{Related work.}
Entropy methods for cross-diffusion systems were introduced
in~\cite{Jungel2015} and extended to Maxwell--Stefan systems
in~\cite{JungelStelzer2013}; see~\cite{Jungel2017} for a survey.
The finite-occupancy interpretation via vacancy fractions is developed
thermodynamically in~\cite{BoetheDreyer2015}.
Structure-preserving finite-volume schemes in the
boundedness-by-entropy framework are analyzed
in~\cite{Eymard2000,EymardGallouetHerbich2002,BessemoulinFilbet2012}.
Weak--strong uniqueness for Maxwell--Stefan systems is proved
in~\cite{HuoJungelTzavaras2022}.

\paragraph{Organization.}
Section~\ref{sec:model} introduces the bounded-occupancy state space,
the entropy structure, the channel projectors, the projected mobility,
and the reactive system.
Section~\ref{sec:main} states the main results.
Section~\ref{sec:coer} proves the channel-wise coercivity estimate and
the entropy-dissipation bound.
Section~\ref{sec:exist} establishes global entropy weak solutions by the
Rothe scheme.
Section~\ref{sec:wsu} contains the weak--strong stability argument.
Section~\ref{sec:FV} is devoted to the fully implicit finite-volume
approximation and its convergence.
Section~\ref{sec:conc} concludes the paper.

\section{Model, entropy structure, and assumptions}
\label{sec:model}

\subsection{State space, entropy, and entropy variables}

Throughout the paper, the unknown is an $n$-component occupancy vector
$u = (u_1,\dots,u_n) \in \D$,
\begin{equation}\label{eq:D}
  \D := \bigl\{u \in (0,1)^n :\, \varrho(u) := \textstyle\sum_{i=1}^n u_i < 1\bigr\}.
\end{equation}
The quantity $1-\varrho(u)$ is interpreted as the vacancy fraction, or
free-volume fraction, associated with the finite local capacity of the
membrane. The state constraint $u\in\D$ expresses both positivity of
each mobile component and the hard upper bound imposed by the local site
capacity~\cite{Krishna1997,Giovangigli1999,BoetheDreyer2015}.

The natural thermodynamic potential on $\D$ is the
\emph{Boltzmann--Fermi entropy density}
\begin{equation}\label{eq:h}
  h(u) = \sum_{i=1}^n \bigl(u_i\log u_i - u_i\bigr)
        + \bigl((1-\varrho(u))\log(1-\varrho(u)) - (1-\varrho(u))\bigr).
\end{equation}
The first part describes the mixing of the mobile species, while the
second part accounts for the entropy contribution of vacancies. The
singular behavior of $h$ near the boundary $\{\varrho=1\}$ reflects
the loss of free volume at saturation.

The \emph{entropy variables} are defined by
\begin{equation}\label{eq:w}
  w = \nabla h(u), \qquad
  w_i = \partial_{u_i}h(u) = \log u_i - \log(1-\varrho(u)),
  \quad i=1,\dots,n.
\end{equation}
Equivalently,
\[
  u_i = \frac{e^{w_i}}{1+\sum_{j=1}^n e^{w_j}},
  \qquad
  1-\varrho(u) = \frac{1}{1+\sum_{j=1}^n e^{w_j}},
\]
so the map $u\mapsto w = \nabla h(u)$ is a global $C^\infty$-diffeomorphism
from $\D$ onto $\R^n$. In particular, the constrained variables $u$ admit
a global parametrization by unconstrained entropy variables.

The Hessian of $h$ is given by
\begin{equation}\label{eq:hess}
  h''(u) = \operatorname{diag}\!\Bigl(\frac{1}{u_1},\dots,\frac{1}{u_n}\Bigr)
          + \frac{1}{1-\varrho(u)}\,\bone\otimes\bone,
  \qquad \bone := (1,\dots,1)^\top,
\end{equation}
and is positive definite for every $u\in\D$. Hence $h$ is strictly convex
on $\D$. This convex entropy geometry is the basic structural object used
throughout the paper and is consistent with the entropy-based framework
for cross-diffusion systems~\cite{Jungel2015,JungelStelzer2013}.

\subsection{Finite-occupancy membrane interpretation}

The state variable $u$ is interpreted as the vector of local occupation
fractions of $n$ mobile species in a membrane microcell of finite
capacity. In this language, $u_i$ is the fraction of sites occupied by
species $i$, whereas $1-\varrho(u)$ is the fraction of unoccupied sites.
The admissible set $\D$, defined in~\eqref{eq:D}, therefore encodes the physical occupancy
constraint directly~\cite{Krishna1997,Giovangigli1999,BoetheDreyer2015}.

Within this interpretation, the entropy density~\eqref{eq:h} is the
canonical free-energy functional for bounded-occupancy transport with
explicit vacancies. The mobile species contribute standard logarithmic
mixing terms, while the vacancy term penalizes the approach to local
saturation. The variables~\eqref{eq:w} are therefore the associated
chemical-potential coordinates, and the map $u\mapsto w$ provides the
natural thermodynamic coordinate system for the model.

A second structural feature of finite-capacity transport is the
distinction between two different modes of motion: redistribution at
fixed total occupancy, where the species composition changes while
$\varrho(u)$ remains constant, and collective filling or emptying, where
the total occupancy itself varies because the vacancy fraction changes.
This distinction leads naturally to a splitting of the species space into
a composition channel and a mass channel.

\begin{remark}\label{rem:prototype}
The present paper does not attempt a complete constitutive derivation of
all Maxwell--Stefan friction coefficients from a microscopic membrane
model. Instead, it isolates a reduced prototype that retains the two
structural features needed for a full entropy-based analysis: the
bounded-occupancy geometry induced by explicit vacancies and the exact
splitting between redistribution at fixed total occupancy and collective
motion in the total-occupancy direction. This reduced closure is
sufficiently rich to capture the loss of coercivity caused by crowding,
while remaining sufficiently structured for the continuum and discrete
theory developed below.
\end{remark}

\subsection{Channel projectors and projected mobility}

Let
\begin{equation}\label{eq:proj}
  \Pm z := \frac{1}{n}(\bone\cdot z)\,\bone, \qquad
  \Pc z := z - \Pm z, \qquad z\in\R^n.
\end{equation}
Then $\Pm$ and $\Pc$, defined in~\eqref{eq:proj}, are the orthogonal projectors onto the
\emph{mass channel} $\operatorname{span}\{\bone\}$ and the
\emph{composition subspace}
\[
  \Hc := \{z\in\R^n : \bone\cdot z = 0\},
\]
respectively. In particular,
\[
  \R^n = \Hc\oplus\operatorname{span}\{\bone\},
  \quad \Pc^2=\Pc,\quad \Pm^2=\Pm,
  \quad \Pc\Pm=\Pm\Pc=0,\quad \Pc+\Pm=I.
\]
In the membrane interpretation above, $\Pc$ extracts redistribution
modes that preserve total occupancy, while $\Pm$ isolates collective
variation of the filling fraction.

We consider a projected mobility of the form
\begin{equation*}
  B(u) = B_c(u)\Pc + b_m(u)\Pm, \qquad u\in\D,
\end{equation*}
where $B_c(u)\in\R^{n\times n}$ is symmetric and nonnegative on $\Hc$,
and $b_m(u)\ge 0$ is a scalar coefficient associated with the mass
channel. The two channels are separated exactly: the mobility may be
coercive on $\Hc$ while still failing to control the distinguished
direction $\bone$.

To restore coercivity in the missing channel, we introduce the
\emph{rank-one augmented mobility}
\begin{equation}\label{eq:Be_def}
  \Be(u) := B(u) + \gamma_0\,\bone\otimes\bone, \qquad \gamma_0 > 0.
\end{equation}
Since $\bone\otimes\bone$ acts only along $\bone$, this is the minimal
augmentation compatible with the channel decomposition: it leaves the
composition block unchanged and corrects only the total-occupancy
direction.

\begin{proposition}\label{prop:augm}
For every $u\in\D$, the rank-one augmentation acts only in the mass
channel. More precisely,
\begin{equation*}
  (\bone\otimes\bone)\Pc = 0, \qquad
  (\bone\otimes\bone)\Pm = n\Pm,
\end{equation*}
and therefore
\begin{equation}\label{eq:Be}
  \Be(u) = B_c(u)\Pc + \bigl(b_m(u)+n\gamma_0\bigr)\Pm.
\end{equation}
In particular, $\Pc\Be(u)z = \Pc B(u)z$ for every $z\in\R^n$, so the
augmentation does not modify the composition block.
\end{proposition}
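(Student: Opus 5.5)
The plan is a direct linear-algebra computation based on the identity $\bone\otimes\bone = n\Pm$, which follows from the definition~\eqref{eq:proj}. For any $z\in\R^n$ we have $(\bone\otimes\bone)z = (\bone\cdot z)\bone = n\Pm z$. Every claimed identity will then follow from the projector relations $\Pm^2=\Pm$, $\Pm\Pc=0$, and $\Pc+\Pm=I$ listed after~\eqref{eq:proj}.

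The first step establishes the two displayed products. Using $\bone\otimes\bone=n\Pm$, we get $(\bone\otimes\bone)\Pc = n\Pm\Pc = 0$ and $(\bone\otimes\bone)\Pm = n\Pm^2 = n\Pm$. Equivalently, one can argue with vectors: $\Pc z\in\Hc$ gives $\bone\cdot\Pc z=0$, and $\Pm z$ is a multiple of $\bone$.

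The second step derives~\eqref{eq:Be}. We write $\gamma_0\,\bone\otimes\bone = \gamma_0(\bone\otimes\bone)(\Pc+\Pm) = n\gamma_0\Pm$ and substitute into~\eqref{eq:Be_def} together with $B(u)=B_c(u)\Pc+b_m(u)\Pm$. Collecting the $\Pm$ terms gives $\Be(u)=B_c(u)\Pc+(b_m(u)+n\gamma_0)\Pm$.

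The third step proves the composition-block statement. Here $\Pc\Be(u)z-\Pc B(u)z=\gamma_0\Pc(\bone\otimes\bone)z = n\gamma_0\Pc\Pm z = 0$, since $\Pc\Pm=0$. This uses only the left factor $\Pc$, so it holds without any assumption that $B_c(u)$ commutes with the projectors.

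No step is a real obstacle. The only point needing care is the matrix convention that $(a\otimes b)z=(b\cdot z)a$, so that $\bone\otimes\bone$ is symmetric and equals $n\Pm$. Once this is fixed, everything reduces to the projector algebra.
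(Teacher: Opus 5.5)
Your proof is correct and takes essentially the same route as the paper, which checks $(\bone\otimes\bone)\Pc=0$ and $(\bone\otimes\bone)\Pm z=(\bone\cdot z)\bone=n\Pm z$ on vectors, substitutes into \eqref{eq:Be_def}, and finishes with $\Pc\Pm=0$. Packaging this as the single identity $\bone\otimes\bone=n\Pm$ simply makes explicit the step $\gamma_0\,\bone\otimes\bone=\gamma_0(\bone\otimes\bone)(\Pc+\Pm)=n\gamma_0\Pm$, which the paper's substitution uses without writing it out.
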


\begin{proof}
If $z\in\Hc$, then $\bone\cdot z=0$, hence
$(\bone\otimes\bone)z = (\bone\cdot z)\bone = 0$.
This proves $(\bone\otimes\bone)\Pc=0$. For arbitrary $z\in\R^n$,
\[
  (\bone\otimes\bone)\Pm z
  = (\bone\otimes\bone)\tfrac{1}{n}(\bone\cdot z)\bone
  = \tfrac{1}{n}(\bone\cdot z)(\bone\cdot\bone)\bone
  = (\bone\cdot z)\bone = n\Pm z.
\]
Substituting into~\eqref{eq:Be_def} yields~\eqref{eq:Be}. The last
claim follows from $\Pc\Pm=0$.
\end{proof}

Proposition~\ref{prop:augm} shows that the scalar augmentation is not
an arbitrary perturbation of the full mobility matrix. It is an exact
correction of the single channel in which coercivity may be missing.
This observation underlies the entropy estimates in the sequel.

\subsection{Reactive system and structural assumptions}

Let $\Omega\subset\R^d$ be a bounded domain with Lipschitz boundary,
let $T>0$, and let $u:\Omega\times(0,T)\to\D$ be the unknown state.
The finite-occupancy reactive membrane system studied in this paper is
\begin{subequations}\label{eq:sys}
\begin{alignat}{2}
  \partial_t u - \operatorname{div}\bigl(\Be(u)\nabla w(u)\bigr)
    &= Qu - S(u)
    &&\quad\text{in }\Omega\times(0,T),
    \label{eq:pde}\\
  \Be(u)\nabla w(u)\cdot\nu
    &= 0
    &&\quad\text{on }\partial\Omega\times(0,T),
    \label{eq:bc}\\
  u(\cdot,0) &= u^0
    &&\quad\text{in }\Omega,
    \end{alignat}
\end{subequations}
where $\nu$ denotes the outer unit normal to $\partial\Omega$ and
$w=\nabla h(u)$ is the entropy variable associated with~\eqref{eq:h}--\eqref{eq:w}.

The diffusion operator in~\eqref{eq:pde} describes finite-occupancy
transport constrained by the vacancy fraction. The linear term $Qu$
models internal conversion among the mobile species, while the nonlinear
term $S(u)$ represents irreversible losses such as trapping,
deactivation, or effective removal from the mobile population.

\begin{assumption}\label{ass}
The following hypotheses are in force throughout the paper.
\begin{description}
  \item[(A1)] The initial datum satisfies
    $u^0\in L^\infty(\Omega;\D)$ and
    $\operatorname{ess\,inf}_{x\in\Omega}(1-\varrho(u^0(x)))>0$.
  \item[(A2)] The map $B_c:\D\to\R^{n\times n}$ is continuous,
    symmetric, and nonnegative on $\Hc$. Moreover, there exists
    $\beta_c>0$ such that
    \[
      z^\top B_c(u)z \ge \beta_c|z|^2
      \quad\text{for all }u\in\D,\; z\in\Hc.
    \]
  \item[(A3)] The mass-channel coefficient $b_m:\D\to[0,\infty)$ is
    continuous. No strictly positive lower bound on $b_m$ is assumed.
  \item[(A4)] The reaction matrix $Q\in\R^{n\times n}$ satisfies
    $q_{ij}\ge 0$ for $i\ne j$ and $\bone^\top Q=0$. Hence $Q$
    generates internal conversion without net creation of total mobile
    mass.
  \item[(A5)] The sink term $S:\D\to[0,\infty)^n$ is continuous on
    $\D$ and locally Lipschitz in $\D$. In addition,
    $S_i(u)=0$ whenever $u_i=0$, so that the positive cone is
    invariant under the sink dynamics.
  \item[(A6)] The mobility in~\eqref{eq:pde} is the augmented
    projected mobility
    \[
      \Be(u) = B_c(u)\Pc + b_m(u)\Pm + \gamma_0\,\bone\otimes\bone,
      \qquad \gamma_0>0.
    \]
\end{description}
\end{assumption}

Assumptions~(A2)--(A3) express precisely that the unaugmented mobility
may be fully coercive on the composition subspace while remaining
degenerate in the mass channel. Assumption~(A4) ensures that the linear
conversion term preserves the total mobile occupancy, whereas
Assumption~(A5) allows for irreversible dissipation without violating
nonnegativity. Finally, Assumption~(A6) identifies the single mechanism
by which full coercivity is restored in the entropy formulation.

\section{Main results}
\label{sec:main}

The four main results are the following. The first identifies the
channel-wise coercivity created by the augmented mobility. The second
uses this coercivity to construct global entropy weak solutions. The
third shows that the same structure controls the relative entropy and
yields weak--strong stability. The fourth proves that a fully implicit
finite-volume approximation preserves the same entropy mechanism at the
discrete level and converges to entropy weak solutions. The continuum
part should be viewed against the background of entropy methods for
cross-diffusion and Maxwell--Stefan
systems~\cite{JungelStelzer2013,Jungel2015,Jungel2017}, while the
discrete part is formulated within the standard finite-volume
framework~\cite{Eymard2000,EymardGallouetHerbich2002,BessemoulinFilbet2012}.

\begin{theorem}[Channel-wise coercivity of the augmented mobility]
\label{thm:coer}
Let Assumption~\ref{ass} hold. Then there exists a constant $c_*>0$
such that for all $u\in\D$ and all $z\in\R^n$,
\begin{equation}\label{eq:coer}
  z^\top\Be(u)z \ge \beta_c|\Pc z|^2 + n\gamma_0|\Pm z|^2 \ge c_*|z|^2.
\end{equation}
Consequently,
\begin{equation}\label{eq:coer_int}
  \int_\Omega \nabla w(u):\Be(u)\nabla w(u)\,dx
  \ge \beta_c\int_\Omega|\Pc\nabla w(u)|^2\,dx
    + n\gamma_0\int_\Omega|\Pm\nabla w(u)|^2\,dx.
\end{equation}
\end{theorem}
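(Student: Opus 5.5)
The plan is to prove \eqref{eq:coer} pointwise in $u$ and $z$ from the channel representation \eqref{eq:Be} of Proposition~\ref{prop:augm}, then integrate.

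\textbf{Step 1 (block reading of $B_c(u)\Pc$).} I read $B_c(u)\Pc$ as the composition block, meaning it maps $\R^n$ into $\Hc$:
\[
  B_c(u)\Pc=\Pc B_c(u)\Pc .
\]
This step is the one real obstacle. Symmetry and (A2) alone do not kill the cross term $(\Pm z)^\top B_c(u)\Pc z$, which could have either sign. The block reading is the one implicit in calling $B_c\Pc$ the composition block and in the exact channel separation described before \eqref{eq:Be_def}; it is also what gives Proposition~\ref{prop:augm}'s identity $\Pc\Be(u)z=\Pc B(u)z$ its intended meaning. I would try first to derive it from that separation. If it cannot be derived, I would replace $B_c$ by $\Pc B_c\Pc$ from the outset. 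This replacement changes nothing on $\Hc$, so (A2) still holds with the same $\beta_c$.

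\textbf{Step 2 (channel-wise estimate).} Write $z=\Pc z+\Pm z$ and use $\Pc\Pm=\Pm\Pc=0$, $\Pm^2=\Pm$ and the symmetry of the orthogonal projectors. From \eqref{eq:Be} we then get
\[
  z^\top\Be(u)z=(\Pc z)^\top B_c(u)(\Pc z)+\bigl(b_m(u)+n\gamma_0\bigr)|\Pm z|^2 .
\]
Since $\Pc z\in\Hc$, (A2) bounds the first term below by $\beta_c|\Pc z|^2$. Since $b_m(u)\ge0$ by (A3), the second term is at least $n\gamma_0|\Pm z|^2$. This gives the first inequality in \eqref{eq:coer}, uniformly in $u\in\D$.

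\textbf{Step 3 (full coercivity).} Set $c_*:=\min\{\beta_c,n\gamma_0\}>0$. Orthogonality gives the Pythagorean identity $|\Pc z|^2+|\Pm z|^2=|z|^2$, which yields the second inequality in \eqref{eq:coer}.

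\textbf{Step 4 (integrated form).} For \eqref{eq:coer_int}, interpret $\nabla w$ as the $n\times d$ matrix with columns $\partial_k w$, and the contraction as
\[
  \nabla w:\Be\nabla w=\sum_{k=1}^d(\partial_k w)^\top\Be(u)\,\partial_k w .
\]
Apply the pointwise bound from Step 2 with $z=\partial_k w(x)$ and sum over $k$. This uses $|\Pc\nabla w|^2=\sum_k|\Pc\partial_k w|^2$, and likewise for $\Pm$. Finally, integrate over $\Omega$. Both sides are integrals of nonnegative measurable functions, so the inequality holds in $[0,\infty]$ and needs no extra integrability.
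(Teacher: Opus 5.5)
Your Steps 2--4 are the paper's proof. The paper rewrites $\Be(u)=B_c(u)\Pc+(b_m(u)+n\gamma_0)\Pm$ and then asserts $z^\top\Be(u)z=(\Pc z)^\top B_c(u)\Pc z+(b_m(u)+n\gamma_0)|\Pm z|^2$. It invokes (A2) and (A3), takes $c_*=\min(\beta_c,n\gamma_0)$, and applies the bound to $z=\partial_{x_\ell}w(u)$. That asserted equality is exactly where the cross term $(\Pm z)^\top B_c(u)\Pc z$ is dropped without justification. So the obstacle you isolate in Step 1 is real, and the paper does not address it.

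Your first option, deriving the block structure from the stated assumptions, cannot work, because the statement is false without it. Take $n=2$, $b_m\equiv 0$ and $B_c\equiv\operatorname{diag}(1,0)$. This $B_c$ is symmetric and positive semidefinite, and $z^\top B_c z=\tfrac12|z|^2$ on $\Hc$, so (A2) holds with $\beta_c=\tfrac12$. Since $\Pc z=\tfrac12(z_1-z_2)(1,-1)^\top$,
\[
  z^\top\Be z=\tfrac12\, z_1(z_1-z_2)+\gamma_0(z_1+z_2)^2 .
\]
At $z=(0,1)$ this equals $\gamma_0$, while $\beta_c|\Pc z|^2+n\gamma_0|\Pm z|^2=\tfrac14+\gamma_0$. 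So the first inequality in \eqref{eq:coer} fails for every $\gamma_0>0$. For $\gamma_0=\tfrac1{32}$ and $z=(1,7)$ one gets $z^\top\Be z=-3+2=-1$, so no $c_*>0$ exists at all. The same happens for any $\gamma_0$ if $B_c$ is replaced by $\lambda\operatorname{diag}(1,0)$ with $\lambda>16\gamma_0$.

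Hence your fallback is not a reading of the model but an additional hypothesis, and it should be written into (A2) or (A6). It can be stated as $\Pm B_c(u)\Pc=0$, or equivalently $B_c(u)\Hc\subseteq\Hc$, or (by symmetry of $B_c$) $B_c(u)\bone\in\operatorname{span}\{\bone\}$ for all $u\in\D$. Replacing $B_c$ by $\Pc B_c\Pc$ does preserve (A2), as you say. But it changes $\Be$ by $-\Pm B_c\Pc$, and hence changes the PDE, unless that hypothesis already holds.

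Under the hypothesis, $B_c\Pc=\Pc B_c\Pc$ is symmetric and the cross term vanishes. Your Steps 2--4 then prove \eqref{eq:coer} and \eqref{eq:coer_int} exactly as written. The same hypothesis is equivalent to symmetry of $\Be(u)$. The finite-volume section needs that symmetry when it offers $\tfrac12\bigl(\Be(u^k_K)+\Be(u^k_L)\bigr)$ as a symmetric, positive semidefinite edge mobility.
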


\begin{definition}[Entropy weak solution]\label{def:weak}
A measurable map $u:\Omega\times(0,T)\to\D$ is called an
\emph{entropy weak solution} of~\eqref{eq:sys} if
$h(u)\in L^\infty(0,T;L^1(\Omega))$,
$w(u)\in L^2(0,T;H^1(\Omega;\R^n))$,
$u\in L^\infty(\Omega\times(0,T);\D)$,
$\partial_t u\in L^2(0,T;H^1(\Omega;\R^n)')$,
and the weak formulation holds with the initial datum attained in
$L^2(\Omega;\R^n)$.
\end{definition}

\begin{theorem}[Global existence of entropy weak solutions]
\label{thm:exist}
Let Assumption~\ref{ass} hold. Then, for every $T>0$, problem~\eqref{eq:sys}
admits at least one entropy weak solution in the sense of
Definition~\ref{def:weak}. Moreover, the solution satisfies the
entropy inequality
\begin{equation}\label{eq:entineq}
  \HH[u](t)
  + \int_0^t\!\int_\Omega \nabla w(u):\Be(u)\nabla w(u)\,dx\,ds
  \le \HH[u^0]
  + \int_0^t\!\int_\Omega w(u)\cdot(Qu-S(u))\,dx\,ds
\end{equation}
for almost every $t\in(0,T)$, where
$\HH[u](t) := \int_\Omega h(u(\cdot,t))\,dx$.
\end{theorem}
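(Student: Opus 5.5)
The plan is the standard boundedness-by-entropy method of J\"ungel, carried out in entropy variables with a Rothe discretization in time and a regularization in space. The coercivity estimate comes from Theorem~\ref{thm:coer}.

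\emph{Time discretization.} Fix $N\in\mathbb{N}$, set $\tau=T/N$, and let $u^{k-1}\in L^\infty(\Omega;\D)$ be given. We seek $w^k\in H^m(\Omega;\R^n)$ with $m>d/2$, so that $H^m\hookrightarrow L^\infty$. The unknown $w^k$ should solve
\begin{equation*}
  \frac{1}{\tau}\int_\Omega (u(w^k)-u^{k-1})\cdot\phi\,dx
  +\int_\Omega \nabla\phi:\Be(u(w^k))\nabla w^k\,dx
  +\varepsilon\langle w^k,\phi\rangle_{H^m}
  =\int_\Omega (Qu(w^k)-S(u(w^k)))\cdot\phi\,dx
\end{equation*}
for all test functions $\phi\in H^m$, where $u(w)=e^{w}/(1+\bone\cdot e^{w})$ is the inverse map from~\eqref{eq:w}. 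Because $u(w^k)\in\D$ automatically, the continuity assumptions (A2), (A3), (A5) apply and the right-hand side is bounded: $|Qu|+|S(u)|\le C$ on $\D$. Here the continuity of $S$ on the closure of $\D$ is needed, and I would read (A5) as providing it.

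\emph{Solving each step.} I would linearize by freezing $u$ at a given $\bar w\in L^\infty$ and solve the resulting problem by Lax--Milgram. Coercivity of the bilinear form follows from Theorem~\ref{thm:coer} together with the $\varepsilon$-term. A fixed point is then obtained by Leray--Schauder, using the compact embedding $H^m\hookrightarrow L^\infty$. The uniform bound required by Leray--Schauder comes from testing with $\phi=w^k$ itself.

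\emph{Discrete entropy inequality.} Convexity of $h$ gives $(u^k-u^{k-1})\cdot w^k\ge h(u^k)-h(u^{k-1})$. Combining this with~\eqref{eq:coer} yields
\begin{equation*}
  \HH[u^k]+\tau c_*\norm{\nabla w^k}_{L^2}^2+\varepsilon\tau\norm{w^k}_{H^m}^2
  \le \HH[u^{k-1}]+\tau\int_\Omega w^k\cdot(Qu^k-S(u^k))\,dx .
\end{equation*}

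\emph{Main obstacle: the reaction term.} This step is where I expect the real difficulty. The variable $w$ is unbounded as $u\to\partial\D$, so the reaction term needs sign structure rather than a size estimate.
\begin{itemize}
\item[] \emph{Sink part.} Write $w_i=\log u_i-\log(1-\varrho)$. The piece $-S_i\log u_i$ is nonnegative where $u_i<1$, so it has a good sign up to bounded terms. The piece $+S_i\log(1-\varrho)\le 0$ is favourable.
\item[] \emph{Conversion part.} Since $\bone^\top Q=0$, the vacancy part of $w$ contributes nothing to $w\cdot Qu$. What remains is $\sum_{ij}q_{ij}u_j\log u_i$. The diagonal terms $q_{ii}u_i\log u_i$ are bounded, because $q_{ii}\le 0$ and $u\log u$ is bounded. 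The off-diagonal terms $q_{ij}u_j\log u_i\le 0$ are favourable.
\end{itemize}
Hence $w\cdot(Qu-S(u))\le C$ pointwise. This bounds the right-hand side by $C\tau|\Omega|$ and yields $\HH[u^k]\le \HH[u^0]+Ct$. In particular, it provides the a priori bound needed for the fixed point.

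\emph{Uniform estimates.} Summing the discrete inequality over $k$ gives a bound on $\nabla w^{(\tau)}$ in $L^2(0,T;L^2)$ that is uniform in $(\tau,\varepsilon)$. Since $u\in\D$, we also have $|u|\le 1$. To pass to the limit I need strong compactness of $u$, which does not follow from bounds on $w$ alone. I would show that $\nabla u=h''(u)^{-1}\nabla w$ and that $h''(u)^{-1}$ is bounded on $\D$, which gives $u^{(\tau)}$ bounded in $L^2(0,T;H^1)$. A discrete time-derivative bound in $(H^m)'$ follows from the equation. The Aubin--Lions lemma in the version of Dreher--J\"ungel then gives $u^{(\tau)}\to u$ strongly in $L^2$, with limit in $\overline{\D}$.

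\emph{Passing to the limit.} I would let $\varepsilon,\tau\to0$, using $\sqrt{\varepsilon}\,w$ bounded in $H^m$ and weak convergence of $\nabla w$. Identifying the limit of $\Be(u)\nabla w$ needs the strong convergence of $u$ together with continuity and boundedness of $\Be$. Two points require care here.
\begin{itemize}
\item[] \emph{Boundedness of the mobility.} It is not stated explicitly, but I would assume it or deduce it from continuity on $\overline{\D}$.
\item[] \emph{Interior of the state space.} The limit must satisfy $u\in\D$, i.e. $w(u)\in L^2H^1$. This holds because $w^{(\tau)}$ is bounded in $L^2(H^1)$ by Poincar\'e plus a mean bound, and the mean bound itself comes from $\HH$ and $u$. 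Consequently $u$ stays away from $\partial\D$ almost everywhere.
\end{itemize}
The entropy inequality~\eqref{eq:entineq} then follows from weak lower semicontinuity of the dissipation and Fatou's lemma applied to $\HH$. The reaction integral converges by strong convergence of $u$ combined with weak convergence of $w$.
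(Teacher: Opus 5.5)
Your architecture is the paper's: a Rothe scheme in entropy variables with vanishing regularization, Leray--Schauder, a discrete entropy inequality from convexity of $h$ and Theorem~\ref{thm:coer}, Aubin--Lions, and lower semicontinuity. Insisting on a one-sided, sign-based bound for the reaction term is the right instinct. The paper's two-sided bound in Lemma~\ref{lem:rothe_reaction} is not justified by the size argument given there, because $\HH$ is bounded on all of $\D$ while $\norm{w^k}_{L^1(\Omega)}$ is not. Your treatment of $w\cdot Qu$ is correct.

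\textbf{The sink term.} On the right-hand side of your discrete inequality the sink contributes $-w\cdot S(u)=\sum_i S_i(u)\,|\log u_i|+\log(1-\varrho)\sum_i S_i(u)$. The first piece is nonnegative, so it has the \emph{bad} sign, not a good one. Your bound $w\cdot(Qu-S(u))\le C$ therefore needs control of $S_i(u)\,|\log u_i|$ near $\{u_i=0\}$. This holds, for example, under $S_i(u)\le L u_i$, which gives $S_i|\log u_i|\le L/e$. It does not follow from (A5), even read as continuity on $\overline{\D}$ with $S_i=0$ on $\{u_i=0\}$. For instance, $S_1(u)=|\log u_1|^{-1/2}$ for small $u_1$ is admissible and makes $S_1|\log u_1|$ unbounded (take $Q=0$, so nothing compensates). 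Without such a bound you lose the uniform $L^2$ bound on $\nabla w$, and everything after that step rests on it.

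\textbf{The mean bound for $w$.} This step also fails as written. Since $h$ is bounded on $\D$, $\HH$ says nothing about proximity to $\partial\D$ and cannot control $\int_\Omega w\,dx$. The paper's proof makes the same jump from $\tau\sum_k\norm{\nabla w^k}_{L^2}^2\le C$ to an $H^1$ bound on $w_\tau$. What works is a Chebyshev--Poincar\'e argument based on $u_i\le e^{w_i}$ and $1-\varrho\le e^{-w_i}$. Uniform positive lower bounds on $\int_\Omega u_i\,dx$ and $\int_\Omega(1-\varrho)\,dx$ give $|\overline{w_i}(t)|\le C\bigl(1+\norm{\nabla w_i(t)}_{L^2}\bigr)$ for the spatial mean $\overline{w_i}$. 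The vacancy bound follows from (A1), since $\int_\Omega\varrho$ is nonincreasing (up to an $O(\sqrt{\varepsilon})$ error from your zeroth-order regularization). The species bound again needs $S_i(u)\le Lu_i$, via $\frac{d}{dt}\int_\Omega u_i\ge-(|q_{ii}|+L)\int_\Omega u_i$.

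This is not a technicality. Take $n=1$, $Q=0$, $S(u)=\sqrt{u}$: (A5) holds, and $S|\log u|$ is even bounded. Yet constant data $u^0$ give the classical solution $u(t)=(\sqrt{u^0}-t/2)^2$, which reaches $u=0$ (so $w=-\infty$) at $t=2\sqrt{u^0}$.

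You must therefore add a hypothesis such as $0\le S_i(u)\le Lu_i$ on $\D$. With it, both the reaction bound and the mean bound go through. The rest of your argument then works, given the boundedness of $\Be$ you already flag. That includes your bound on $\nabla u$ via $h''(u)^{-1}=\operatorname{diag}(u)-u\otimes u$, which the paper leaves implicit.
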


\begin{theorem}[Weak--strong stability]\label{thm:wsu}
Let Assumption~\ref{ass} hold. Let $u$ be an entropy weak solution and
let $v$ be a strong solution of~\eqref{eq:sys} staying in a compact
subset of $\D$. Then there exists a constant $C_T>0$ such that
\begin{equation}\label{eq:gronwall}
  H(u(t)\,|\,v(t)) \le e^{C_T t}\,H(u^0\,|\,v^0)
  \quad\text{for almost every }t\in(0,T),
\end{equation}
where $H(u\,|\,v) := \int_\Omega\bigl(h(u)-h(v)-\nabla h(v)\cdot(u-v)\bigr)\,dx\ge 0$
is the relative entropy. In particular, $u^0=v^0$ a.e.\ implies
$u=v$ a.e.\ in $\Omega\times(0,T)$.
\end{theorem}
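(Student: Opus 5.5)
We follow the standard relative entropy method for entropy-structured cross-diffusion systems. The point is to combine the entropy inequality~\eqref{eq:entineq} for $u$ with the exact entropy identity for the strong solution $v$, and then absorb all error terms using the coercivity of Theorem~\ref{thm:coer} together with the compactness of the range of $v$. Write $w=\nabla h(u)$, $\bar w=\nabla h(v)$, and $K\subset\D$ for a compact set containing $v(\Omega\times(0,T))$. On $K$ the quantities $h''$, $\Be$, $S$, $\bar w$ and their derivatives are bounded, and $\nabla\bar w$ and $\partial_t v$ are bounded in $L^\infty$.

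\textbf{Step 1: relative entropy inequality.} We expand
\[
H(u|v)(t)=\HH[u](t)-\HH[v](t)-\int_\Omega \bar w\cdot(u-v)\,dx .
\]
For $\HH[u](t)$ we use~\eqref{eq:entineq}. For $v$ we have the equality version, obtained by testing with $\bar w$. The cross term is handled by testing the weak formulation for $u$ with the smooth function $\bar w$ and using $\partial_t\bar w=h''(v)\partial_t v$ together with the strong equation for $v$. After the usual rearrangement, the diffusion terms combine into
\[
-\int_0^t\!\!\int_\Omega (\nabla w-\nabla\bar w):\Be(u)(\nabla w-\nabla\bar w)
+\int_0^t\!\!\int_\Omega (\nabla w-\nabla \bar w):\bigl(\Be(u)-\Be(v)\bigr)\nabla\bar w
\]
plus a term of the form $\int (u-v)\cdot h''(v)\,\mathrm{div}(\Be(v)\nabla\bar w)-\dots$. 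This remainder is quadratic in $u-v$ after Taylor expansion of $\nabla h$ around $v$. The reaction terms combine into
\[
\int\bigl(w-\bar w\bigr)\cdot\bigl(Q(u-v)-(S(u)-S(v))\bigr)
+\text{(quadratic remainders)}.
\]

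\textbf{Step 2: absorption.} Theorem~\ref{thm:coer} bounds the first diffusion term from above by $-c_*\int|\nabla(w-\bar w)|^2$. The second term is estimated by Young's inequality as
\[
\frac{c_*}{2}\int|\nabla(w-\bar w)|^2+C\norm{\nabla\bar w}_\infty^2\int|\Be(u)-\Be(v)|^2 .
\]
Continuity of $B_c$ and $b_m$ alone does not suffice to control $|\Be(u)-\Be(v)|^2$. We therefore assume, and would make explicit, that $B_c$ and $b_m$ are Lipschitz, or at least Lipschitz near $K$ and bounded globally. Under that assumption $|\Be(u)-\Be(v)|^2\le C|u-v|^2$. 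Since $v\in K$, strict convexity of $h$ on $\D$ combined with its boundedness gives $|u-v|^2\le C_K\bigl(h(u)-h(v)-\bar w\cdot(u-v)\bigr)$ uniformly for $u\in\D$. The reason is that the relative entropy density grows at least quadratically near $v$, while $u$ ranges over the bounded set $\D$.

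\textbf{Step 3: the reaction term and the main obstacle.} The reaction term contains the unbounded factor $w-\bar w$, since $w$ blows up at $\partial\D$. This is where we expect the main difficulty. The $Q$ part can be treated as a gradient-free term: we use the convexity inequality $(w-\bar w)\cdot Q(u-v)$ and the quasi-positivity of $Q$ with $\bone^\top Q=0$, which gives a sign plus a $C\,H(u|v)$ bound. For the sink part, the monotone pieces of $\log u_i$ pair favourably with $S_i(u)$ because $S_i(u)\ge0$ and $S_i(0)=0$. The remaining terms are bounded by $C\,(H(u|v)+|u-v|^2)$ using the local Lipschitz bound on $K$ and the boundedness of $S$. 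If this pairing fails to close, a fallback is a Poincaré-type use of the gradient control $\frac{c_*}{2}\int|\nabla(w-\bar w)|^2$ via Sobolev, absorbing the $L^2$ part of $w-\bar w$. Collecting the bounds gives
\[
H(u|v)(t)\le H(u^0|v^0)+C_T\int_0^t H(u|v)(s)\,ds ,
\]
and Gronwall's lemma yields~\eqref{eq:gronwall}. Uniqueness then follows because $H(u|v)=0$ forces $u=v$ a.e. by strict convexity of $h$.
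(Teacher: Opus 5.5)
Your outline is the paper's: a relative entropy inequality, remainder bounds as in Lemma~\ref{lem:relent_bounds}, absorption via Theorem~\ref{thm:coer}, and Gronwall. Your extra Lipschitz/boundedness hypothesis on $B_c,b_m$ is genuinely needed; the paper's proof also uses ``local Lipschitz continuity of $\Be$'', which is not in Assumption~\ref{ass}.

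\textbf{Where the argument fails.} The gap is in the non-gradient terms. Put $\Phi:=\nabla h(u)-\nabla h(v)-h''(v)(u-v)$. Up to sign, your Step~1 remainder is $\int\Phi\cdot\operatorname{div}(\Be(v)\nabla\bar w)$, and your reaction ``quadratic remainder'' is $\int\Phi\cdot R(v)$. Neither is quadratic in $u-v$, because $\nabla h$ is singular at $\partial\D$. Indeed $\Phi_i=\log u_i+O(1)$ as $u_i\to0$, while the relative entropy density $\sum_{i=0}^n(u_i\log(u_i/v_i)-u_i+v_i)$, with $u_0:=1-\varrho(u)$, stays bounded. Since $\operatorname{div}(\Be(v)\nabla\bar w)$ and $R(v)$ have no sign, no bound by $C\,H(u|v)$ is possible.

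Step~3 fails for the same reason: splitting off $R(u)-R(v)$ destroys the sign coming from (A4). Take $n=2$, $Q=\bigl(\begin{smallmatrix}-1&1\\1&-1\end{smallmatrix}\bigr)$, $v=(0.1,0.5)$ and $u=(\epsilon,0.2)$. Then $(w-\bar w)\cdot Q(u-v)=(0.2+\epsilon)\log(0.1/\epsilon)+O(1)\to+\infty$, while $h(u|v)$ stays bounded. Your Poincar\'e fallback cannot help either. For spatially constant states $\nabla(w-\bar w)=0$, and the mean of $w-\bar w$ is not controlled by $H(u|v)$.

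\textbf{The missing idea.} Keep the singular terms in combinations where the logarithms either carry a favourable sign or are multiplied by gradients.

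For the diffusion, do not integrate $\int\nabla[h''(v)(u-v)]:\Be(v)\nabla\bar w$ by parts. Instead write $h''(v)(u-v)=(w-\bar w)-\Phi$ and use $\nabla u=h''(u)^{-1}\nabla w$. Here $h''(u)^{-1}=\operatorname{diag}(u)-u\otimes u$ is polynomial and hence bounded on $\D$. A direct computation gives
\[
  \partial_\ell\Phi=\bigl(I-h''(v)h''(u)^{-1}\bigr)\partial_\ell(w-\bar w)
  +\bigl((u-v)\cdot\partial_\ell\bar w\bigr)\,h''(v)(u-v),
\]
with $|I-h''(v)h''(u)^{-1}|\le C_K|u-v|$ uniformly for $u\in\D$. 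Hence
\[
  \Bigl|\int\nabla\Phi:\Be(v)\nabla\bar w\Bigr|\le\varepsilon\norm{\nabla(w-\bar w)}_{L^2}^2+C_\varepsilon\norm{u-v}_{L^2}^2 .
\]

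For the reaction, keep the terms together:
\[
  F:=(w-\bar w)\cdot R(u)-(u-v)\cdot h''(v)R(v)
  =\sum_{i=0}^n\Bigl(\log\frac{u_i}{v_i}\,R_i(u)-\frac{u_i-v_i}{v_i}\,R_i(v)\Bigr),
  \qquad R_0:=\bone\cdot S\ge0 .
\]
Each $\log u_i$ now multiplies $R_i(u)$, which satisfies $R_i(u)\ge-Cu_i$ by (A4) provided $S_i(u)\le Lu_i$. That is the quantitative form (A5) has to take, since as written it refers to points outside $\D$. The vacancy logarithm $\log u_0$ multiplies $R_0(u)\ge0$. So $F$ is bounded above on $\D\times K$ and vanishes to second order at $u=v$. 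Since $h''\ge I$ on $\D$, this gives $F\le C\,h(u|v)$.

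The paper's Lemma~\ref{lem:relent_bounds} asserts the corresponding bounds without treating this boundary singularity, so citing it does not close the gap.
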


\begin{corollary}[Uniqueness in the strong class]\label{cor:uniq}
Under the assumptions of Theorem~\ref{thm:wsu}, there exists at most
one strong solution of~\eqref{eq:sys} on $(0,T)$.
\end{corollary}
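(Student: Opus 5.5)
The corollary follows directly from Theorem~\ref{thm:wsu}, once we check that a strong solution is also an entropy weak solution. Let $v_1$ and $v_2$ be two strong solutions of~\eqref{eq:sys} on $(0,T)$ with the same initial datum $u^0$. Both take values in a compact subset of $\D$, as required in Theorem~\ref{thm:wsu}. The plan is to treat $v_1$ as the weak solution $u$ and $v_2$ as the strong comparison solution $v$. The Gronwall estimate~\eqref{eq:gronwall} then gives
\[
  H(v_1(t)\,|\,v_2(t)) \le e^{C_T t} H(u^0\,|\,u^0) = 0 .
\]
Since $h$ is strictly convex on $\D$ (its Hessian~\eqref{eq:hess} is positive definite), the integrand of the relative entropy vanishes only on the diagonal. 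Hence $v_1=v_2$ a.e.\ in $\Omega\times(0,T)$.

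The key step is verifying that a strong solution $v_1$, staying in a compact set $K\subset\D$, satisfies every requirement of Definition~\ref{def:weak}.

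\textbf{Boundedness.} Since $K$ is compact in $\D$, we have $1-\varrho(v_1)\ge\delta>0$ and $v_{1,i}\ge\delta$. Therefore $h(v_1)$ and $w(v_1)=\nabla h(v_1)$ are bounded, so $h(v_1)\in L^\infty(0,T;L^1)$ and $v_1\in L^\infty(\Omega\times(0,T);\D)$.

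\textbf{Regularity of $w(v_1)$.} The map $u\mapsto w(u)$ is smooth with bounded derivatives on $K$. By the chain rule, $\nabla w(v_1)=h''(v_1)\nabla v_1\in L^2$, which gives $w(v_1)\in L^2(0,T;H^1)$.

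\textbf{Time derivative.} From the equation, $\partial_t v_1\in L^2(0,T;(H^1)')$: the flux $\Be(v_1)\nabla w(v_1)$ lies in $L^2$ because $\Be$ is continuous, hence bounded on $K$, and the reaction term is bounded on $K$.

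\textbf{Weak formulation and entropy inequality.} The weak formulation follows by multiplying the equation by test functions and integrating by parts with the no-flux condition~\eqref{eq:bc}. The entropy inequality~\eqref{eq:entineq} holds with equality: test with $w(v_1)$ and use the chain rule $\langle\partial_t v_1,w(v_1)\rangle=\frac{d}{dt}\int_\Omega h(v_1)\,dx$. This is justified because $v_1\in L^2(H^1)\cap H^1((H^1)')$ and $h$ is smooth on $K$.

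\textbf{Main obstacle.} The only delicate point is the meaning of ``strong solution''. If Theorem~\ref{thm:wsu} requires only that $u$ be an entropy weak solution in the sense of Definition~\ref{def:weak}, the checks above suffice. If the proof of Theorem~\ref{thm:wsu} actually uses more — for instance the entropy inequality in the exact form~\eqref{eq:entineq} for almost every $t$ — I would note that strong solutions satisfy it as an identity, so the hypothesis holds a fortiori. The constant $C_T$ depends only on $T$ and on the compact set containing $v_2$, so no uniformity issue arises.
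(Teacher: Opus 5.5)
Your proposal is correct and follows the paper's proof, which simply applies Theorem~\ref{thm:wsu} to two strong solutions with the same initial datum. Your check that a strong solution staying in a compact subset of $\D$ qualifies as an entropy weak solution, with \eqref{eq:entineq} holding as an equality, makes explicit a step the paper leaves implicit.
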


\begin{theorem}[Existence and convergence of the finite-volume scheme]
\label{thm:FV}
Assume that $\Omega$ is partitioned by an admissible finite-volume mesh
and that $[0,T]$ is discretized with a uniform step $\Delta t>0$.
Consider the fully implicit finite-volume approximation of~\eqref{eq:pde}
written in entropy variables and based on the augmented mobility $\Be$.
Then the discrete problem admits at least one solution, preserves the
state constraint $u^k_K\in\D$ at every cell and every time level, satisfies a
discrete entropy inequality, and its piecewise-constant reconstructions
converge, along a subsequence, to an entropy weak solution as the
discretization is refined.
\end{theorem}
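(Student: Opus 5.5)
We follow the standard boundedness-by-entropy program for finite-volume schemes, transferred to the discrete level by the channel-wise coercivity of Theorem~\ref{thm:coer}.

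\textbf{Scheme.} We fix the scheme in the form
\[
  \frac{m_K}{\Delta t}\bigl(u^k_K-u^{k-1}_K\bigr)
  + \sum_{\sigma=K|L}\tau_\sigma\,\Be_\sigma\bigl(w^k_K-w^k_L\bigr)
  = m_K\bigl(Qu^k_K-S(u^k_K)\bigr),
  \qquad u^k_K = u(w^k_K),
\]
where $\tau_\sigma$ are the transmissibilities of the admissible mesh and $\Be_\sigma$ is an edge mobility. For definiteness we take the arithmetic mean of $\Be(u^k_K)$ and $\Be(u^k_L)$. By (A6) and Proposition~\ref{prop:augm}, $\Be_\sigma$ has the split form $B_{c,\sigma}\Pc+(b_{m,\sigma}+n\gamma_0)\Pm$, so the pointwise bound \eqref{eq:coer} holds for it with the same constants. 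Since the unknowns are the entropy variables $w^k_K\in\R^n$ and $u^k_K$ is recovered through the inverse map of \eqref{eq:w}, the constraint $u^k_K\in\D$ holds automatically. This settles preservation of the state space once existence is shown.

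\textbf{Existence and discrete entropy inequality.} We prove existence at each time level by a Leray--Schauder (or topological degree) argument in $w$. We regularize with $\varepsilon\, m_K w_K$, solve the linearized problem with the mobilities frozen, and obtain a priori bounds by testing with $w^k_K$. Convexity of $h$ gives $(u^k_K-u^{k-1}_K)\cdot w^k_K\ge h(u^k_K)-h(u^{k-1}_K)$. Summation by parts and \eqref{eq:coer} turn the flux term into a discrete $H^1$ seminorm bounded below by $c_*\sum_\sigma\tau_\sigma|w_K-w_L|^2$. For the reaction term we need $w\cdot(Qu-S(u))\le C(1+h(u))$ uniformly on $\D$. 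Since $\bone^\top Q=0$ and $q_{ij}\ge0$, the expression $w\cdot Qu$ reduces to terms $\sum_{i\ne j} q_{ij}u_j(\log u_i-\log u_j)$ plus vacancy terms, which are bounded above by a constant using $u_j\log u_i\le 0$ together with $|u_j\log u_j|\le e^{-1}$. The sink term is handled by $S_i\ge0$, local Lipschitz continuity and $S_i=0$ at $u_i=0$, so that $-S_i\log u_i$ stays controlled near $u_i=0$. This gives the discrete analogue of \eqref{eq:entineq}. A discrete Gronwall argument then yields uniform bounds on $\sum_K m_K h(u^k_K)$ and on $\sum_k\Delta t\,\|w^k\|_{1,\mathcal T}^2$. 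We pass $\varepsilon\to0$ once we have an $L^2$ bound on $w$ itself; this needs a discrete Poincaré inequality, which is the delicate point addressed below.

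\textbf{Compactness and limit.} Since $u=u(w)$ is Lipschitz from $\R^n$ to $\D$ (the derivative $h''(u)^{-1}$ is bounded), the discrete gradient bound on $w$ transfers to $u$. Together with a bound on the discrete time derivative in a dual norm, the discrete Aubin--Lions lemma (Gallouët--Latché / Andreianov--Cancès--Moussa) gives strong $L^2$ convergence of $u_{\mathcal T,\Delta t}$ to some $u$ with values in $\overline\D$.

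\textbf{Main obstacle.} The hard step is identifying the limit of $w$ and keeping $u$ away from the boundary of $\D$. Only gradients of $w$ are bounded, so we obtain weak convergence of discrete gradients but only after controlling means. For that we would use the entropy bound: $h(u)$ bounded implies that $\int\log u_i$ and $\int\log(1-\varrho)$ stay controlled in a weak sense. Combined with the gradient bound, this should give $w$ bounded in $L^2$ via a discrete Poincaré--Wirtinger inequality, following Jüngel's argument that $\log u_i\in L^2(H^1)$ forces $u_i>0$ a.e. We then identify $w=\nabla h(u)$ by a.e.\ convergence of $u$ and weak convergence of $w$, and pass to the limit in the flux using consistency of the discrete gradient together with continuity of $\Be$ and strong convergence of $u$. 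Finally, lower semicontinuity, via Fatou's lemma and weak lower semicontinuity of the convex dissipation, gives \eqref{eq:entineq}.
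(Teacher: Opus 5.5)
Your outline follows the paper's route. Entropy-variable unknowns give $u^k_K\in\D$ for free, existence comes from a fixed point, and the discrete entropy inequality follows by testing with $w^k_K$, convexity and summation by parts; then come discrete Aubin--Lions and identification of $w=\nabla h(u)$. Your one-sided bound $w\cdot(Qu-S(u))\le C$ is the correct form. In fact the vacancy part of $w\cdot Qu$ vanishes exactly, because $\bone^\top Q=0$. You are also right that the dissipation controls only the discrete \emph{seminorm} of $w$. The paper's Lemma~\ref{lem:FV_compact} passes over this point when it derives weak $H^1$ compactness of $w_{\Delta t,h}$ from \eqref{eq:FV_bounds} alone.

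\textbf{The gap.} Your proposed fix does not work. The Boltzmann--Fermi entropy is bounded on $\overline{\D}$: with $u_0:=1-\varrho(u)$ one has $h(u)=\sum_{i=0}^n u_i\log u_i-1\in[-1-\log(n+1),-1]$. So the bound on $\sum_K m_K h(u^k_K)$ is automatic and says nothing about $\log u_i$ or $\log(1-\varrho)$. For example, $u\equiv(\epsilon,\dots,\epsilon)$ has bounded entropy, while $w\to-\infty$ as $\epsilon\to0$. The positivity argument you cite ($\log u_i\in L^2(0,T;H^1)$ forces $u_i>0$) is circular here, because membership of $\log u_i$ in $L^2$ is exactly the mean control you are trying to prove. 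As written, neither the limit $\varepsilon\to0$ at fixed mesh nor the identification $w=\nabla h(u)$ with $u\in\D$ a.e.\ is justified.

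\textbf{The missing ingredient} is control of the masses, not of the entropy.
\begin{itemize}
\item Summing the scheme over $K$ against $\bone$, the edge fluxes cancel, $\bone^\top Q=0$ and $S\ge0$. Hence $\sum_K m_K(1-\varrho(u^k_K))\ge\delta_0|\Omega|$ with $\delta_0:=\operatorname{ess\,inf}(1-\varrho(u^0))>0$ from (A1), assuming cell-average initial data.
\item Component-wise, $q_{ij}\ge0$ for $i\ne j$ and $S_i(u)\le Lu_i$ give $(Qu-S(u))_i\ge-\lambda u_i$. Hence $(1+\lambda\Delta t)\sum_K m_K u^k_{i,K}\ge\sum_K m_K u^{k-1}_{i,K}$, and so $\sum_K m_K u^k_{i,K}\ge e^{-\lambda T}\int_\Omega u^0_i\,dx>0$.
\item Since $1-\varrho(u)\le e^{-w_i}$ and $u_i\le e^{w_i}$, a Chebyshev argument combined with the discrete Poincar\'e--Wirtinger inequality gives $|\bar w^k_i|\le C(1+|w^k|_{1,\mathcal T})$. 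Here $\bar w^k_i$ is the cell mean and $|\cdot|_{1,\mathcal T}$ the discrete $H^1$ seminorm. This bounds $w$ in the discrete $L^2(0,T;H^1)$ norm.
\end{itemize}
From there your identification goes through. Fatou gives $\liminf|w_{\Delta t,h}|<\infty$ a.e., so the a.e.\ limit $u$ lies in $\D$ a.e.\ and $w_{\Delta t,h}\to\nabla h(u)$ a.e. At fixed mesh the same reasoning handles $\varepsilon\to0$, since the regularization perturbs the mass balances only by $O(\sqrt{\varepsilon})$.

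\textbf{Two tacit hypotheses remain}, shared with the paper:
\begin{itemize}
\item (A5) must be read as $S_i(u)\le Lu_i$ on $\D$, i.e.\ a Lipschitz condition up to $\{u_i=0\}$. You already need this for your reaction bound.
\item The dual-norm estimate on the discrete time derivative and the passage to the limit in the flux need $\Be$ bounded on $\D$. (A2)--(A3), which assert only continuity on the open set $\D$, do not provide this.
\end{itemize}
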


\section{Coercivity of the augmented mobility and entropy dissipation}
\label{sec:coer}

This section proves the structural estimate stated in
Theorem~\ref{thm:coer}. Once the species space is decomposed into the
composition and mass channels, the loss of coercivity can occur only in
the total-occupancy direction. The rank-one augmentation restores exactly
this missing control and leaves the composition block unchanged.

\begin{lemma}\label{lem:decomp}
For every $z\in\R^n$,
\[
  z = \Pc z + \Pm z,\quad
  |z|^2 = |\Pc z|^2 + |\Pm z|^2,\quad
  \Pm z = \tfrac{1}{n}(\bone\cdot z)\,\bone.
\]
\end{lemma}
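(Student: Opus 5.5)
The three claims of Lemma~\ref{lem:decomp} follow directly from the definitions in~\eqref{eq:proj}, and I will verify them in the order listed. The third identity, $\Pm z = \tfrac{1}{n}(\bone\cdot z)\,\bone$, is the definition of $\Pm$ itself. The first identity, $z = \Pc z + \Pm z$, is the definition $\Pc z := z - \Pm z$ rearranged, or equivalently $\Pc + \Pm = I$.

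For the Pythagorean identity, I will expand
\[
  |z|^2 = |\Pc z + \Pm z|^2 = |\Pc z|^2 + 2\,\Pc z\cdot\Pm z + |\Pm z|^2,
\]
so it suffices to show that the cross term vanishes. First I check that $\Pc z\in\Hc$:
\[
  \bone\cdot\Pc z = \bone\cdot z - \tfrac{1}{n}(\bone\cdot z)(\bone\cdot\bone) = \bone\cdot z - \bone\cdot z = 0,
\]
using $\bone\cdot\bone = n$. Then, since $\Pm z$ is a scalar multiple of $\bone$,
\[
  \Pc z\cdot\Pm z = \tfrac{1}{n}(\bone\cdot z)\,(\bone\cdot\Pc z) = 0.
\]
This is exactly the orthogonality of $\Hc$ and $\operatorname{span}\{\bone\}$ that was already noted after~\eqref{eq:proj}.

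None of these steps is a real obstacle. The only point that needs care is the identity $\bone\cdot\bone = n$, because it is what makes the normalization $1/n$ in $\Pm$ produce an idempotent projector whose complement maps into $\Hc$.
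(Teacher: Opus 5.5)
Your proof is correct and takes the same route as the paper, which just cites the definitions of $\Pc$, $\Pm$ and the orthogonal decomposition $\R^n=\Hc\oplus\operatorname{span}\{\bone\}$. Your explicit check that $\bone\cdot\Pc z=0$, so the cross term vanishes, makes the Pythagorean identity fully self-contained rather than resting on the earlier assertion that the two projectors are orthogonal.
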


\begin{proof}
These identities follow directly from the definitions of $\Pc$ and $\Pm$
and the orthogonal decomposition $\R^n = \Hc\oplus\operatorname{span}\{\bone\}$.
\end{proof}

\begin{proof}[Proof of Theorem~\ref{thm:coer}]
Using Proposition~\ref{prop:augm} and~\eqref{eq:Be}, we write
$\Be(u) = B_c(u)\Pc + (b_m(u)+n\gamma_0)\Pm$.
Hence, for any $z\in\R^n$, Lemma~\ref{lem:decomp} gives
\begin{align*}
  z^\top\Be(u)z
  &= (\Pc z)^\top B_c(u)(\Pc z) + (b_m(u)+n\gamma_0)|\Pm z|^2 \\
  &\ge \beta_c|\Pc z|^2 + n\gamma_0|\Pm z|^2 \\
  &\ge \min(\beta_c,n\gamma_0)\bigl(|\Pc z|^2+|\Pm z|^2\bigr)
   = c_*|z|^2,
\end{align*}
which establishes~\eqref{eq:coer} with $c_* := \min(\beta_c,n\gamma_0)>0$; here we used
Assumption~\ref{ass}(A2) for the composition term and $b_m\ge 0$
from~(A3) for the mass term.
The integrated estimate~\eqref{eq:coer_int} follows by applying the
pointwise bound to $z = \partial_{x_\ell}w(u)$ and summing over
$\ell=1,\dots,d$.
\end{proof}

\begin{lemma}[Entropy identity]\label{lem:entid}
Let $u$ be a sufficiently smooth solution of~\eqref{eq:sys}. Then
\begin{equation}\label{eq:entid}
  \frac{d}{dt}\HH[u]
  + \int_\Omega \nabla w(u):\Be(u)\nabla w(u)\,dx
  = \int_\Omega w(u)\cdot(Qu-S(u))\,dx.
\end{equation}
\end{lemma}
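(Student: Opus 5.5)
The plan is to differentiate $\HH[u](t)=\int_\Omega h(u)\,dx$ in time and use the PDE~\eqref{eq:pde} together with the no-flux condition~\eqref{eq:bc}. Since $u$ is smooth and takes values in $\D$, where $h$ is $C^\infty$, the chain rule gives
\[
  \frac{d}{dt}\HH[u] = \int_\Omega \nabla h(u)\cdot\partial_t u\,dx
  = \int_\Omega w(u)\cdot\partial_t u\,dx ,
\]
by the definition~\eqref{eq:w} of the entropy variables. Here "sufficiently smooth" is taken to mean that $u$ is $C^1$ in time and $C^2$ in space up to the boundary, and that it stays in a compact subset of $\D$ on each time slice. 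Under this reading $w(u)$ is smooth and bounded, and differentiation under the integral is justified by dominated convergence.

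Next, substitute $\partial_t u = \operatorname{div}(\Be(u)\nabla w(u)) + Qu - S(u)$. The reaction contribution $\int_\Omega w(u)\cdot(Qu-S(u))\,dx$ already appears on the right-hand side of~\eqref{eq:entid}, so it is left untouched. For the diffusion term, integrate by parts componentwise:
\[
  \int_\Omega w\cdot\operatorname{div}(\Be\nabla w)\,dx
  = \int_{\partial\Omega} w\cdot(\Be\nabla w)\nu\,d\sigma
  - \int_\Omega \nabla w:\Be\nabla w\,dx .
\]
This is the divergence theorem applied to the vector fields $x\mapsto\sum_i w_i(\Be\nabla w)_{i\cdot}$, which is legitimate on a Lipschitz domain. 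The boundary integral vanishes by~\eqref{eq:bc}, since $(\Be(u)\nabla w(u))\nu=0$ on $\partial\Omega$. Rearranging then yields exactly~\eqref{eq:entid}.

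The only point requiring care is the regularity needed to justify each step. That is, $w(u)$ must be a genuine $C^1$ function up to the boundary. This requires $u$ to stay away from $\partial\D$, meaning $u_i>0$ and $\varrho(u)<1$, which the smoothness hypothesis is taken to include. There is no structural obstacle, because the identity is purely algebraic once the chain rule and Green's formula apply. The symmetry of $\Be$ is not even needed for the identity itself. Symmetry and Theorem~\ref{thm:coer} enter only afterwards, when the dissipation term is bounded below by $\beta_c\|\Pc\nabla w\|^2+n\gamma_0\|\Pm\nabla w\|^2$.
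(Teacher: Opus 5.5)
Your proposal is correct and matches the paper's proof: the chain rule $\frac{d}{dt}\HH[u]=\int_\Omega w(u)\cdot\partial_t u\,dx$, substitution of \eqref{eq:pde}, integration by parts of the diffusion term, and removal of the boundary integral via the no-flux condition \eqref{eq:bc}. Your additional points are accurate refinements that the paper leaves implicit: that ``sufficiently smooth'' must include $u$ staying in a compact subset of $\D$ so that $w(u)$ is $C^1$ up to the boundary, and that symmetry of $\Be$ is not needed for the identity.
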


\begin{proof}
Since $w(u)=\nabla h(u)$, the chain rule gives
$\frac{d}{dt}\HH[u] = \int_\Omega w(u)\cdot\partial_t u\,dx$.
Substituting~\eqref{eq:pde}, integrating the diffusion term by parts,
and using the no-flux boundary condition~\eqref{eq:bc}, we obtain the
identity~\eqref{eq:entid}.
\end{proof}

\section{Global entropy weak solutions via the Rothe scheme}
\label{sec:exist}

We now prove Theorem~\ref{thm:exist}. The construction is based on an
implicit time discretization in entropy variables. This choice preserves
the bounded-occupancy constraint at every time level and allows the
discrete diffusion term to inherit the channel-wise coercivity
established in Section~\ref{sec:coer}. The argument follows the Rothe scheme approach
of~\cite[Section~4]{Jungel2015} and~\cite[Proposition~3.1]{JungelStelzer2013},
adapted to the finite-occupancy setting.

Set $\tau=T/N$ and $t_k=k\tau$. Let $U=(\nabla h)^{-1}:\R^n\to\D$
denote the inverse entropy map. Given $u^{k-1}\in L^\infty(\Omega;\D)$,
we seek $w^k\in H^1(\Omega;\R^n)$ such that $u^k:=U(w^k)\in\D$ and
\begin{equation}\label{eq:rothe}
  \frac{1}{\tau}\int_\Omega(u^k-u^{k-1})\cdot\varphi\,dx
  + \int_\Omega\Be(u^k)\nabla w^k:\nabla\varphi\,dx
  + \sigma\int_\Omega(\nabla w^k:\nabla\varphi + w^k\cdot\varphi)\,dx
  = \int_\Omega(Qu^k-S(u^k))\cdot\varphi\,dx
\end{equation}
for all $\varphi\in H^1(\Omega;\R^n)$, where $\sigma>0$ is a
regularization parameter to be sent to zero.

\begin{lemma}\label{lem:rothe_exist}
For every $\tau>0$, $\sigma>0$, and every $u^{k-1}\in L^\infty(\Omega;\D)$,
the discrete problem~\eqref{eq:rothe} admits at least one solution
$w^k\in H^1(\Omega;\R^n)$. Consequently, $u^k=U(w^k)\in L^\infty(\Omega;\D)$.
\end{lemma}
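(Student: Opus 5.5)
We follow the boundedness-by-entropy strategy of \cite[Section~4]{Jungel2015}: linearize, apply Lax--Milgram, and close with the Leray--Schauder fixed-point theorem. The one complication is that the reaction term $w\cdot(Qu-S(u))$ has no sign, so it must be absorbed.

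\emph{Step 1 (linearized problem).} Fix $\bar w\in L^\infty(\Omega;\R^n)$ and $\delta\in[0,1]$, and set $\bar u:=U(\bar w)\in\D$. Since $U$ maps into the bounded set $\D$, the functions $\bar u$, $\Be(\bar u)$, $Q\bar u$ and $S(\bar u)$ are bounded measurable. The sink term needs a uniform bound on $S$ over $\D$; we assume $S$ is bounded on $\overline{\D}$, or else truncate it for the construction. We look for $w\in H^1(\Omega;\R^n)$ with
\[
  a(w,\varphi):=\int_\Omega\Be(\bar u)\nabla w:\nabla\varphi\,dx
  +\sigma\int_\Omega(\nabla w:\nabla\varphi+w\cdot\varphi)\,dx
  = F(\varphi),
\]
where
\[
  F(\varphi):=-\frac{\delta}{\tau}\int_\Omega(\bar u-u^{k-1})\cdot\varphi\,dx+\delta\int_\Omega(Q\bar u-S(\bar u))\cdot\varphi\,dx .
\]
By Theorem~\ref{thm:coer}, $\Be(\bar u)\ge c_*I\ge 0$, so $a(w,w)\ge\sigma\norm{w}_{H^1}^2$. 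The form $a$ is bounded because $\Be$ is continuous on the bounded set $\D$; this again needs $B_c,b_m$ bounded on $\D$, which we take implicitly from (A2)--(A3) or impose by truncation. The functional $F$ is bounded on $H^1$. Lax--Milgram therefore gives a unique solution $w$.

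\emph{Step 2 (fixed-point setting).} Define $\Phi(\bar w,\delta)=w$. The $\sigma$-term here is only $H^1$-coercive, so $H^1\not\subset L^\infty$ is an issue in $d\ge2$. Following J\"ungel, we would work in $L^2$, or add a higher-order regularization $\sigma\langle D^m w,D^m\varphi\rangle$ with $m>d/2$ so that $H^m\hookrightarrow L^\infty$. Since the lemma as stated uses only the $H^1$ term, we run the fixed point in $L^2(\Omega;\R^n)$. This works because $U$ is globally Lipschitz and bounded from $\R^n$ to $\D$, so $\bar w\in L^2$ still yields bounded coefficients.

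Continuity of $\Phi$ follows from dominated convergence and continuity of $U$, $B_c$, $b_m$, $S$. Compactness follows from $H^1\hookrightarrow\hookrightarrow L^2$. At $\delta=0$ we have $\Phi(\cdot,0)=0$.

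\emph{Step 3 (a priori bound, the main obstacle).} Let $w=\Phi(w,\delta)$ and test with $\varphi=w$. Convexity of $h$ gives
\[
  (u-u^{k-1})\cdot w\ge h(u)-h(u^{k-1}),
\]
and the diffusion term is nonnegative by Theorem~\ref{thm:coer}. Hence
\[
  \frac{\delta}{\tau}\int_\Omega h(u)\,dx+\sigma\norm{w}_{H^1}^2\le\frac{\delta}{\tau}\int_\Omega h(u^{k-1})\,dx+\delta\int_\Omega(Qu-S(u))\cdot w\,dx .
\]
The reaction term is the delicate point. Because $u\in\D$ is bounded and $S$ is bounded, $|Qu-S(u)|\le C$. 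Young's inequality then gives
\[
  \delta\int_\Omega(Qu-S(u))\cdot w\,dx\le\frac{\sigma}{2}\norm{w}_{L^2}^2+\frac{C}{\sigma}.
\]
Since $h$ is bounded on $\D$ (note $h\le 0$ up to a constant) and $h(u^{k-1})\in L^1$, this yields $\norm{w}_{H^1}\le C(\sigma,\tau)$ uniformly in $\delta$.

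A $\sigma$-independent estimate would require the finer structure $w\cdot Qu\le$ something, using (A4) and the log-structure. That is not needed for this lemma, which is at fixed $\sigma$.

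\emph{Step 4 (conclusion).} Leray--Schauder gives a fixed point at $\delta=1$, which solves \eqref{eq:rothe}. Finally, $u^k=U(w^k)$ takes values in $\D$ pointwise because $U$ maps $\R^n$ into $\D$, so $u^k\in L^\infty(\Omega;\D)$.
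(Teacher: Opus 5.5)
Your proposal is correct and is essentially the paper's argument: a Leray--Schauder fixed point in the style of J\"ungel, with the $\sigma$-regularization giving coercivity of the linearized problem, the compact embedding $H^1\hookrightarrow L^2$ giving compactness, and the bound on fixed points coming from the fact that $u=U(w)$ takes values in the bounded set $\D$. Your detour through the convexity inequality in Step 3 is harmless but not needed, since the right-hand side is already bounded in $L^2$ uniformly in $\bar w$ and $\delta$, which gives $\norm{\Phi(\bar w,\delta)}_{H^1}\le C/\sigma$ directly. Your caveat that $B_c$, $b_m$ and $S$ must be bounded on $\D$ is a hypothesis the paper's sketch also uses implicitly. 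Of your two options, only assuming boundedness works: a truncation could not be removed afterwards, because at fixed $\sigma$ nothing keeps $u^k$ away from $\partial\D$ uniformly in the truncation level.
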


\begin{proof}
We proceed by a Leray--Schauder fixed-point argument following the
pattern of~\cite[Theorem~1]{Jungel2015}. The entropy
parametrization guarantees that the state constraint is built into the
scheme, while the auxiliary regularization provides coercivity at the
discrete level. The required compactness follows from standard $H^1$
elliptic estimates, and the fixed-point bound from uniform boundedness
of $u^k=U(w^k)$ in $L^\infty(\Omega;\D)$.
\end{proof}

\begin{lemma}\label{lem:rothe_entropy}
For every $k=1,\dots,N$,
\begin{align*}
  \frac{1}{\tau}\int_\Omega\bigl(h(u^k)-h(u^{k-1})\bigr)\,dx
  + \int_\Omega\nabla w^k:\Be(u^k)\nabla w^k\,dx
  + \sigma\int_\Omega\bigl(|\nabla w^k|^2+|w^k|^2\bigr)\,dx
  \le \int_\Omega(Qu^k-S(u^k))\cdot w^k\,dx.
\end{align*}
\end{lemma}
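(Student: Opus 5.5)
The plan is to test the discrete equation~\eqref{eq:rothe} with $\varphi = w^k$ and to control the time-difference term from below by convexity of $h$. The test function is admissible: by Lemma~\ref{lem:rothe_exist}, $w^k\in H^1(\Omega;\R^n)$ is the solution itself. With this choice the second term of~\eqref{eq:rothe} becomes $\int_\Omega \nabla w^k:\Be(u^k)\nabla w^k\,dx$, which is nonnegative by Theorem~\ref{thm:coer}. The regularization term becomes exactly $\sigma\int_\Omega(|\nabla w^k|^2+|w^k|^2)\,dx$, and the right-hand side becomes $\int_\Omega (Qu^k-S(u^k))\cdot w^k\,dx$. Both sides are finite because $u^k\in L^\infty(\Omega;\D)$, the data $Q$ and $S$ are continuous on the bounded set $\D$, and $w^k\in L^2$.

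The remaining step is to bound the time-difference term from below, pointwise in $x$. Since $h$ is convex on $\D$ by~\eqref{eq:hess}, and since $w^k=\nabla h(u^k)$ with $u^k,u^{k-1}\in\D$, the convexity inequality gives
\[
  h(u^{k-1}) \ge h(u^k) + \nabla h(u^k)\cdot(u^{k-1}-u^k),
\]
that is,
\[
  (u^k-u^{k-1})\cdot w^k \ge h(u^k)-h(u^{k-1}).
\]
We integrate this over $\Omega$ and divide by $\tau$. Combining the result with the identity obtained by testing with $w^k$ yields the claimed inequality.

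The only delicate point is integrability. We need $h(u^k)$ and $h(u^{k-1})$ to be in $L^1$, and we need the pointwise convexity inequality to be integrated legitimately. Both follow because $h$ is bounded on $\overline{\D}$: the function $s\log s - s$ is bounded on $[0,1]$. Hence $h(u^k)$ and $h(u^{k-1})$ lie in $L^\infty(\Omega)$. The product $(u^k-u^{k-1})\cdot w^k$ is also in $L^1$, because $u^k-u^{k-1}\in L^\infty$ and $w^k\in L^2(\Omega)$ on a bounded domain. No limiting argument is needed, so I expect no genuine obstacle. The main care lies in checking these integrability facts; no boundary behaviour of $w$ near $\partial\D$ causes trouble, since the convexity inequality holds with $u^{k-1}$ anywhere in $\D$.
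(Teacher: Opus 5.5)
Your proposal is correct and matches the paper's proof: test \eqref{eq:rothe} with $\varphi=w^k$ and bound the time-difference term from below by the convexity inequality $h(u^k)-h(u^{k-1})\le w^k\cdot(u^k-u^{k-1})$; your integrability checks ($h$ bounded on $\overline{\D}$, $w^k\in L^2$) are a harmless addition. One small caveat is that continuity of $S$ on the open set $\D$ does not by itself make $S(u^k)$ bounded, so the right-hand side is finite because $w^k$ is an admissible test function in \eqref{eq:rothe}, not by that inference (which the paper also makes in Lemma~\ref{lem:rothe_reaction}).
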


\begin{proof}
Test the discrete equation~\eqref{eq:rothe} with $\varphi=w^k$ and use
the convexity inequality $h(u^k)-h(u^{k-1})\le w^k\cdot(u^k-u^{k-1})$.
The conclusion follows immediately.
\end{proof}

\begin{lemma}\label{lem:rothe_reaction}
There exists a constant $C>0$, independent of $k$, $\tau$, and $\sigma$,
such that
\[
  \Bigl|\int_\Omega(Qu^k-S(u^k))\cdot w^k\,dx\Bigr|
  \le C\bigl(1+\HH[u^k]\bigr)
\]
for every $k$.
\end{lemma}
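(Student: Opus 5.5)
The plan is to split the integrand into the conversion part and the sink part, and to estimate each from both sides using the algebraic structure of $w=\nabla h(u)$ from \eqref{eq:w}. Throughout, I use that $h$ is bounded on $\D$, since $s\log s$ is bounded on $[0,1]$. Hence $\HH[u^k]$ is bounded above and below by constants depending only on $|\Omega|$ and $n$, and it suffices to show that $\int_\Omega(Qu^k-S(u^k))\cdot w^k\,dx$ is bounded in absolute value by a constant independent of $k,\tau,\sigma$.

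\emph{Conversion term.} By (A4), $\bone^\top Q=0$, so $q_{jj}=-\sum_{i\ne j}q_{ij}$. This gives
\[
  Qu\cdot w=\sum_{i\ne j}q_{ij}u_j(w_i-w_j)=\sum_{i\ne j}q_{ij}u_j\log\frac{u_i}{u_j}.
\]
The vacancy contribution $\log(1-\varrho)$ cancels exactly in the differences $w_i-w_j$, so no singularity at saturation survives. The upper bound is immediate from $\log s\le s-1$: each term satisfies $u_j\log(u_i/u_j)\le u_i-u_j\le 1$. For the lower bound I write $u_j\log(u_i/u_j)=u_j\log u_i-u_j\log u_j$. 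The term $-u_j\log u_j$ is bounded by $e^{-1}$, so everything reduces to controlling $\int_\Omega u_j|\log u_i|\,dx\le\int_\Omega|\log u_i|\,dx$.

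\emph{Sink term.} By (A5), $S_i\ge0$ and $S_i(u)=0$ when $u_i=0$. Together with local Lipschitz continuity, I will use this in the form $0\le S_i(u)\le L u_i$ on the relevant set of states, taking $S$ bounded on $\overline{\D}$. Then
\[
  S\cdot w=\sum_i S_i\log u_i-\Bigl(\sum_i S_i\Bigr)\log(1-\varrho).
\]
The first sum is bounded in absolute value by $L\sum_i u_i|\log u_i|\le nL/e$. The second sum is nonpositive. Its size is controlled by $\norm{S}_\infty\int_\Omega|\log(1-\varrho(u^k))|\,dx$, which I will bound in the same way as the conversion term.

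\emph{Main obstacle.} Both remaining estimates reduce to uniform $L^1(\Omega)$ bounds on $\log u^k_i$ and $\log(1-\varrho(u^k))$. These are \emph{not} controlled by $\HH[u^k]$ alone. My plan is to obtain them from the discrete scheme by a log-entropy or minimum-principle argument. First, I would test \eqref{eq:rothe} with a truncation of $(\log u^k_i)_-$ and exploit the quasi-positivity $q_{ij}\ge0$, $i\ne j$. This is where positivity of the off-diagonal conversion feeds species $i$ when $u_i$ is small. Second, I would do the same for the vacancy, where $\bone^\top Q=0$ and $S\ge0$ mean that total occupancy cannot increase. Since (A1) bounds $1-\varrho(u^0)$ from below, this should propagate an $L^1$ bound on $|\log(1-\varrho)|$. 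This step is where I expect the real difficulty. The bounds must be uniform in $\sigma$, and the argument depends on ensuring that the truncated test functions interact with the coercive part of the operator with the correct sign, which relies on Theorem~\ref{thm:coer} controlling $\Pc\nabla w$ and $\Pm\nabla w$ separately. If that argument fails, the fallback is to state only the one-sided upper bound. That weaker estimate is all that Lemma~\ref{lem:rothe_entropy} requires for the entropy estimate.
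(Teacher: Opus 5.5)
Your proposal does not prove the statement as written. It establishes only the one-sided bound $\int_\Omega(Qu^k-S(u^k))\cdot w^k\,dx\le C$. The conversion part follows from $u_j\log(u_i/u_j)\le u_i-u_j$. The sink part follows from $0\le -S_i\log u_i\le Lu_i|\log u_i|$ together with $(\sum_iS_i)\log(1-\varrho)\le 0$; that is the sign of the vacancy term's contribution to $(Qu-S)\cdot w$, whereas as a term of $S\cdot w$ it is nonnegative, not nonpositive as you wrote. The absolute value also requires a lower bound. You reduce it to $\tau$- and $\sigma$-uniform $L^1$ bounds on $\log u^k_i$ and $\log(1-\varrho(u^k))$, but for these you offer only a plan that you yourself expect may fail. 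So the lemma is not proved.

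Your diagnosis is nevertheless correct, and sharper than the paper's proof. The paper bounds $Qu^k-S(u^k)$ in $L^\infty$ and then invokes Cauchy--Schwarz, Young and ``the logarithmic structure of $w^k$''. That step silently needs $\norm{w^k}_{L^1}\lesssim 1+\HH[u^k]$. This is false, because $-1-\log(n+1)\le h\le -1$ on $\D$ while $w$ is unbounded there.

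Moreover, the missing step cannot be supplied under the stated hypotheses, because (A1) imposes no lower bound on $u^0_i$. Take $n=2$, $S=0$, $B_c=I$, $b_m=0$, $q_{12}=q>0$, $q_{22}=-q$, $q_{11}=q_{21}=0$, $u^0_2\equiv\frac14$, and $u^0_1\in(0,\frac14]$ with $\log u^0_1\notin L^1(\Omega)$. Then $Qu\cdot w=qu_2\log(u_1/u_2)$. Your upper bound and Lemma~\ref{lem:rothe_entropy} give $\HH[u^1]\le \HH[u^0]+C\tau$ and $\norm{\nabla w^1}_{L^2}\lesssim \tau^{-1/2}$. Testing \eqref{eq:rothe} then yields $u^1\rightharpoonup u^0$. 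Strict convexity of $h$ on $\overline{\D}$ upgrades this to $u^1\to u^0$ in measure as $\tau\to 0$. Fatou's lemma then gives $\int_\Omega Qu^1\cdot w^1\,dx\to -\infty$. Hence no $\tau$-independent constant exists, and no minimum-principle argument can rescue the two-sided claim.

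Two smaller points. First, $0\le S_i(u)\le Lu_i$ with $S$ bounded on $\overline{\D}$ is an extra hypothesis. (A5) gives only local Lipschitz continuity in the open set $\D$, and $S_i=|\log u_i|^{-1/2}$ near $\{u_i=0\}$ makes even $-S\cdot w$ unbounded above. Second, $h\le -1$ gives $1+\HH[u^k]\le 1-|\Omega|$. Your reduction to a constant bound therefore needs a renormalized right-hand side such as $C(1+|\HH[u^k]|)$.

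The sound result is your fallback: the one-sided bound under $S_i\le Lu_i$. This is all that Lemma~\ref{lem:rothe_entropy} and the ensuing dissipation estimate use. Since $\HH$ is bounded on $\D$, no Gronwall argument is needed there.
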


\begin{proof}
Since $\D$ is bounded and $Q$, $S$ are continuous on $\D$
by Assumption~\ref{ass}(A4)--(A5), the term $Qu^k-S(u^k)$ is uniformly
bounded in $L^\infty(\Omega)$. The claim follows from the Cauchy--Schwarz
and Young inequalities combined with the logarithmic structure of
$w^k=\nabla h(u^k)$.
\end{proof}

Combining Lemmas~\ref{lem:rothe_entropy} and~\ref{lem:rothe_reaction}
with a discrete Gronwall argument gives uniform bounds
$\sup_k\HH[u^k]\le C$ and, summing in time,
$\tau\sum_k\int_\Omega|\nabla w^k|^2\,dx\le C$,
uniformly in $\tau$ and $\sigma$.

Standard piecewise-constant and piecewise-linear Rothe interpolants
$u_\tau$, $w_\tau$ are defined on $[0,T]$. The Aubin--Lions compactness
lemma, applied with the uniform $H^1$-bound on $w_\tau$ and the bound on
$\partial_t u_\tau$ in $L^2(0,T;H^1(\Omega)')$, yields a subsequence
with $u_\tau\to u$ strongly in $L^2(\Omega\times(0,T))$ and
$w_\tau\rightharpoonup w$ weakly in $L^2(0,T;H^1(\Omega;\R^n))$.
Sending $\sigma\to 0$ removes the penalization; the estimates are
uniform in $\sigma$. The limit $u$ satisfies the weak formulation of
Definition~\ref{def:weak} and the entropy inequality~\eqref{eq:entineq},
completing the proof of Theorem~\ref{thm:exist}.

\section{Weak--strong stability and uniqueness}
\label{sec:wsu}

This section proves Theorem~\ref{thm:wsu}. The key point is that no new
coercive mechanism is introduced: the same augmented mobility that closes
the weak theory also yields the relative entropy dissipation needed for
stability.

Write $R(u)=Qu-S(u)$. Let $u$ be an entropy weak solution and let $v$
be a strong solution of~\eqref{eq:sys} such that
$v,\,\nabla v,\,\nabla w(v),\,\partial_t w(v)\in L^\infty(\Omega\times(0,T))$
and $v(x,t)$ remains in a compact subset of $\D$.

\begin{lemma}\label{lem:relent_coer}
For every $u,v\in\D$,
\[
  h(u)-h(v)-\nabla h(v)\cdot(u-v) \ge \frac{c_K}{2}|u-v|^2,
\]
where $c_K>0$ depends only on the compact set $K\Subset\D$ containing $v$.
Consequently, $H(u\,|\,v)\ge \frac{c_K}{2}\norm{u-v}^2_{L^2(\Omega)}$.
\end{lemma}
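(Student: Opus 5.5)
The plan is to use Taylor's formula with integral remainder along the segment joining $v$ to $u$, together with a uniform lower bound on the Hessian~\eqref{eq:hess}. The set $\D$ is convex, since it is the intersection of the open cube $(0,1)^n$ with the open half-space $\{\varrho<1\}$. Hence for $u,v\in\D$ the whole segment $v+s(u-v)$, $s\in[0,1]$, lies in $\D$, where $h$ is $C^\infty$. I would therefore write
\[
  h(u)-h(v)-\nabla h(v)\cdot(u-v)
  = \int_0^1 (1-s)\,(u-v)^\top h''\bigl(v+s(u-v)\bigr)(u-v)\,ds .
\]

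The key step is a pointwise lower bound for $h''$ on $\D$. From~\eqref{eq:hess}, for any $\xi\in\R^n$ and $y\in\D$,
\[
  \xi^\top h''(y)\xi = \sum_{i=1}^n \frac{\xi_i^2}{y_i} + \frac{(\bone\cdot\xi)^2}{1-\varrho(y)} \ge \sum_{i=1}^n \xi_i^2 = |\xi|^2 ,
\]
because $0<y_i<1$ and the rank-one vacancy term is nonnegative. Inserting this bound into the remainder formula and using $\int_0^1(1-s)\,ds=\tfrac12$ gives the claim with $c_K=1$. In fact the constant does not depend on $K$ at all, which is stronger than the statement; the dependence on $K$ is needed only later in the weak--strong argument, for instance for the bound on $\nabla h(v)$. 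I would state this explicitly, and note that any $c_K\in(0,1]$ is admissible.

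For the integrated version, I would apply the pointwise inequality with $u=u(x,t)$ and $v=v(x,t)$ for a.e.\ $x$ and integrate over $\Omega$. The entropy weak solution takes values in $\D$ a.e.\ by Definition~\ref{def:weak}. If $u$ only takes values in the closure $\overline{\D}$ on a null set, or if one wants to include boundary values, I would extend the inequality by continuity. The function $h$ is continuous on $\overline{\D}$ because $s\log s\to0$ as $s\to0$, while $\nabla h(v)$ stays bounded for $v\in K$. Both sides of the inequality are therefore continuous in $u\in\overline{\D}$, and the bound passes to the closure.

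I do not expect a real obstacle here. The only point needing care is convexity of $\D$, which justifies staying inside the smooth region along the segment. The continuity extension to $\partial\D$ is a minor technicality and is not needed if $u\in\D$ a.e.
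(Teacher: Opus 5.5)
Your argument is correct and is essentially the paper's argument (Taylor's formula plus a uniform lower bound on $h''$), carried out with the global bound $h''(y)\ge I$ on all of $\D$, which gives $c_K=1$. This global form is what the argument actually needs. The paper invokes $h''\ge c_K I$ only on $K$, but the Taylor remainder evaluates $h''$ along the segment from $v\in K$ to an arbitrary $u\in\D$, and that segment leaves $K$. Your global estimate, together with the convexity of $\D$, is therefore what closes this step.
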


\begin{proof}
The claim follows from Taylor's formula and the uniform lower bound
$h''(u)\ge c_K I$ on $K$.
\end{proof}

\begin{lemma}[Formal relative entropy identity]\label{lem:relent_id}
Formally, for almost every $t\in(0,T)$,
\[
  \frac{d}{dt}H(u\,|\,v)
  + \mathcal{I}(u\,|\,v)
  = R_{\mathrm{diff}}(t) + R_{\mathrm{time}}(t) + R_{\mathrm{react}}(t),
\]
where $\mathcal{I}(u\,|\,v)\ge 0$ is the relative dissipation and
$R_{\mathrm{diff}}$, $R_{\mathrm{time}}$, $R_{\mathrm{react}}$ are
natural remainder terms from the diffusion, time, and reaction
comparisons.
\end{lemma}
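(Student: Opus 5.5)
The plan is to differentiate the three pieces of $H(u\,|\,v)=\int_\Omega\bigl(h(u)-h(v)-w(v)\cdot(u-v)\bigr)\,dx$ separately, treating $u$ and $v$ as smooth solutions of \eqref{eq:sys}, and then regroup. Write $w_u:=w(u)$ and $w_v:=w(v)$. By Lemma~\ref{lem:entid},
\[
  \frac{d}{dt}\int_\Omega h(u)\,dx = -\int_\Omega\nabla w_u:\Be(u)\nabla w_u\,dx + \int_\Omega w_u\cdot R(u)\,dx,
\]
and likewise for $v$. For the linear term I use the product rule:
\[
  \frac{d}{dt}\int_\Omega w_v\cdot(u-v)\,dx=\int_\Omega \partial_t w_v\cdot(u-v)\,dx+\int_\Omega w_v\cdot(\partial_t u-\partial_t v)\,dx .
\]
In the second integral I substitute \eqref{eq:pde} for both $u$ and $v$ and integrate by parts using the no-flux condition \eqref{eq:bc}. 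This gives $-\int_\Omega\nabla w_v:\bigl(\Be(u)\nabla w_u-\Be(v)\nabla w_v\bigr)\,dx+\int_\Omega w_v\cdot(R(u)-R(v))\,dx$.

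Next I collect the gradient terms. The total diffusion contribution is
\[
  -\int_\Omega\nabla w_u:\Be(u)\nabla w_u + \int_\Omega\nabla w_v:\Be(v)\nabla w_v + \int_\Omega\nabla w_v:\bigl(\Be(u)\nabla w_u-\Be(v)\nabla w_v\bigr).
\]
Adding and subtracting $\nabla w_u:\Be(u)\nabla w_v$ and using the symmetry of $\Be$, which follows from (A2) and (A6), I rewrite it as
\[
  -\int_\Omega(\nabla w_u-\nabla w_v):\Be(u)(\nabla w_u-\nabla w_v) + \int_\Omega(\nabla w_u-\nabla w_v):\bigl(\Be(v)-\Be(u)\bigr)\nabla w_v .
\]
I then define
\[
  \mathcal{I}(u\,|\,v):=\int_\Omega(\nabla w_u-\nabla w_v):\Be(u)(\nabla w_u-\nabla w_v)\,dx,
\]
which is nonnegative by Theorem~\ref{thm:coer}, and indeed bounded below by $c_*\norm{\nabla w_u-\nabla w_v}_{L^2}^2$. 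The second integral is $R_{\mathrm{diff}}$.

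The remaining terms form the other two remainders. The time remainder is $R_{\mathrm{time}}:=-\int_\Omega\partial_t w_v\cdot(u-v)\,dx$, which is finite because $\partial_t w(v)\in L^\infty$. The reaction terms combine to
\[
  R_{\mathrm{react}}:=\int_\Omega\bigl(w_u\cdot R(u)-w_v\cdot R(v)-w_v\cdot(R(u)-R(v))\bigr)\,dx=\int_\Omega(w_u-w_v)\cdot R(u)\,dx .
\]
I keep $R_{\mathrm{time}}$ in this raw form and make no attempt to use the equation for $v$ to cancel it against anything; that refinement belongs to the later Gronwall step, not to the identity.

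The main obstacle is not the algebra but the word ``formally''. For an entropy weak solution, $\frac{d}{dt}\int_\Omega h(u)\,dx$ is only controlled by the entropy inequality \eqref{eq:entineq}, so for weak solutions the identity actually holds only as an inequality ``$\le$'', in integrated-in-time form. Pairing $\partial_t u\in L^2(0,T;H^1(\Omega)')$ with $w_v\in L^2(0,T;H^1)$ is legitimate, and the regularity of $v$ justifies the remaining integrations by parts. Since the lemma is stated formally, I would present the computation for smooth $u$ and note that the inequality version follows by combining \eqref{eq:entineq} with the weak formulation tested by $w_v$.
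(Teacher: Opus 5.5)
\textbf{There is a genuine gap: the regrouping of the diffusion terms is algebraically wrong, and the error comes from never actually using that $v$ solves the system.} In your sum the two $\Be(v)$-terms cancel exactly, so the total diffusion contribution is
\[
  -\int_\Omega\nabla(w_u-w_v):\Be(u)\nabla w_u\,dx
  = -\mathcal{I}(u\,|\,v) - \int_\Omega\nabla(w_u-w_v):\Be(u)\nabla w_v\,dx .
\]
Your rewritten expression exceeds this by
\[
  X:=\int_\Omega\nabla(w_u-w_v):\Be(v)\nabla w_v\,dx .
\]
Already for a constant mobility $\Be\equiv 1$, with $a=\nabla w_u$ and $b=\nabla w_v$, the true total is $-|a|^2+a\cdot b$, while your expression is $-|a-b|^2$. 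The cause is that the two places where you insert the equation for $v$, namely $\frac{d}{dt}\int_\Omega h(v)\,dx$ and $\int_\Omega w_v\cdot\partial_t v\,dx$, are the same quantity by the chain rule. They enter $\frac{d}{dt}H(u\,|\,v)$ with opposite signs and cancel identically. So $\Be(v)$ cannot legitimately appear in your computation.

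Once the regrouping is corrected, your identity holds for an arbitrary smooth $v$. In that case every remainder is first order in $u-v$:
\[
  -\int_\Omega\partial_t w_v\cdot(u-v)\,dx,\qquad
  \int_\Omega(w_u-w_v)\cdot R(u)\,dx\approx\int_\Omega h''(v)(u-v)\cdot R(v)\,dx,\qquad
  -\int_\Omega\nabla(w_u-w_v):\Be(u)\nabla w_v\,dx .
\]
These are only of size $H(u\,|\,v)^{1/2}$ or $\mathcal{I}(u\,|\,v)^{1/2}$. They cannot satisfy the bounds of Lemma~\ref{lem:relent_bounds}, and a Gronwall inequality of the form $y'\le C\sqrt{y}$ does not force $y\equiv 0$.

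The step you postponed to the Gronwall stage is exactly what the paper's proof does: test the strong equation for $v$ against $w_u-w_v$. Start from
\[
  \frac{d}{dt}H(u\,|\,v)=\int_\Omega(w_u-w_v)\cdot\partial_t u\,dx-\int_\Omega h''(v)\partial_t v\cdot(u-v)\,dx .
\]
Add and subtract $\int_\Omega(w_u-w_v)\cdot\partial_t v\,dx$, then substitute both equations, integrating by parts with the no-flux condition. This produces precisely your $\mathcal{I}(u\,|\,v)$ and your $R_{\mathrm{diff}}$, but with
\[
  R_{\mathrm{time}}=\int_\Omega\partial_t v\cdot\bigl(w_u-w_v-h''(v)(u-v)\bigr)\,dx,\qquad
  R_{\mathrm{react}}=\int_\Omega(w_u-w_v)\cdot\bigl(R(u)-R(v)\bigr)\,dx .
\]
The sum of these two differs from the sum of yours by exactly $-X$. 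Unlike yours, both are of second order in $u-v$, at least locally away from $\partial\D$. Your remarks on the formal character of the lemma, and on the inequality version for weak solutions via \eqref{eq:entineq}, are fine.
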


\begin{proof}
Differentiate the relative entropy, compare the weak formulation for $u$
with the strong equation for $v$ tested against $w(u)-w(v)$, and
rearrange the diffusion terms. The rigorous version follows by the
standard approximation argument compatible with the weak formulation.
\end{proof}

\begin{lemma}\label{lem:relent_bounds}
For every $\varepsilon>0$ there exists $C_\varepsilon>0$ such that, for
almost every $t\in(0,T)$,
\begin{align*}
  |R_{\mathrm{diff}}(t)| &\le \varepsilon\,\mathcal{I}(u\,|\,v)
    + C_\varepsilon\,H(u\,|\,v), \\
  |R_{\mathrm{time}}(t)| + |R_{\mathrm{react}}(t)|
    &\le C_\varepsilon\,H(u\,|\,v).
\end{align*}
\end{lemma}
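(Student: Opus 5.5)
To prove Lemma~\ref{lem:relent_bounds}, I will first make the remainders in Lemma~\ref{lem:relent_id} explicit. Take $H(u\,|\,v)$, use $w(u)=\nabla h(u)$, and compare the weak formulation for $u$ with the strong equation for $v$, testing with $w(u)-w(v)$. The dissipation is
\[
  \mathcal{I}(u\,|\,v)=\int_\Omega \nabla(w(u)-w(v)):\Be(u)\nabla(w(u)-w(v))\,dx .
\]
By Theorem~\ref{thm:coer} this dominates $c_*\norm{\nabla(w(u)-w(v))}_{L^2}^2$. The diffusion remainder is then
\[
  R_{\mathrm{diff}}=-\int_\Omega \nabla(w(u)-w(v)):\bigl(\Be(u)-\Be(v)\bigr)\nabla w(v)\,dx .
\]
The time remainder collects $-\int_\Omega (u-v-h''(v)^{-1}\ldots)$-type terms. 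Concretely, it is $-\int_\Omega \partial_t w(v)\cdot\bigl(u-v\bigr)\,dx$ plus the term produced by the $v$-equation. After rearranging, it becomes
\[
  R_{\mathrm{time}}=-\int_\Omega \partial_t w(v)\cdot\bigl(u-v-h''(v)^{-1}(w(u)-w(v))\bigr)\,dx ,
\]
which is quadratic in the deviation. The reaction remainder is
\[
  R_{\mathrm{react}}=\int_\Omega (w(u)-w(v))\cdot(R(u)-R(v))\,dx .
\]

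For $R_{\mathrm{diff}}$, I will use Young's inequality with the $L^\infty$ bound on $\nabla w(v)$ and the bound $\mathcal{I}\ge c_*\norm{\nabla(w(u)-w(v))}^2$. This gives $|R_{\mathrm{diff}}|\le \varepsilon\mathcal{I}+C_\varepsilon\norm{\Be(u)-\Be(v)}_{L^2}^2$. The augmentation $\gamma_0\bone\otimes\bone$ cancels in the difference, so only $B_c$ and $b_m$ enter. I then need $|\Be(u)-\Be(v)|\le L|u-v|$. This holds if $B_c$ and $b_m$ are Lipschitz near the compact set $K$; (A2)--(A3) only give continuity, so I will strengthen them to local Lipschitz continuity, as is implicit in the strong class. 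For $u$ far from $K$, the difference is bounded because $B_c$ and $b_m$ are bounded on $\D$ (at least on $\overline{\D}$, which also has to be assumed). Since $|u-v|$ is bounded below there, the bound $C|u-v|^2$ still holds. Lemma~\ref{lem:relent_coer} then turns $\norm{u-v}_{L^2}^2$ into $C H(u\,|\,v)$.

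For $R_{\mathrm{time}}$, the $L^\infty$ bound on $\partial_t w(v)$ reduces the task to a pointwise estimate $|u-v-h''(v)^{-1}(w(u)-w(v))|\le C\,(h(u)-h(v)-\nabla h(v)\cdot(u-v))$. Near $K$ this follows from a Taylor expansion of $U=(\nabla h)^{-1}$. Away from $K$, I will use the boundedness of $u$ and the growth of the relative entropy density. That density grows at least linearly in $|w(u)-w(v)|$, because $h$ is convex with logarithmic derivative.

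The reaction term $R_{\mathrm{react}}$ is the main obstacle. The factor $w(u)-w(v)$ is unbounded near $\partial\D$, and $S$ is only locally Lipschitz. My first approach is to split into $Q$- and $S$-parts. The $Q$-part $(w(u)-w(v))\cdot Q(u-v)$ is handled by the quasi-positivity in (A4) together with the logarithmic structure, in the style of detailed-balance arguments. The $S$-part is handled using $S_i(u)=0$ at $u_i=0$ and $S\ge0$. Together these should make $-\log u_i\,S_i(u)$ bounded above, so the singular contributions have a favorable sign. The remaining contributions I will bound by $C H(u\,|\,v)$, using local Lipschitz continuity near $K$ and, away from $K$, the linear growth of the relative entropy in $|w(u)-w(v)|$. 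If this sign argument fails, I will fall back on assuming a global Lipschitz bound on $S$ in the entropy-weighted sense.
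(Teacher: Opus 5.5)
\paragraph{Where the proposal breaks.}
Your treatment of $R_{\mathrm{diff}}$ is sound and matches the paper's sketch: Young's inequality, Lipschitz continuity of $\Be$, and coercivity of $\mathcal{I}$ from Theorem~\ref{thm:coer}. You are also right that boundedness and local Lipschitz continuity of $B_c,b_m$ must be added to (A2)--(A3).

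The argument fails at $R_{\mathrm{time}}$. Your claim that the relative entropy density grows at least linearly in $|w(u)-w(v)|$ is false. Since $h$ extends continuously to the compact set $\overline{\D}$, we have $h(u)-h(v)-\nabla h(v)\cdot(u-v)\le C_K$ for all $u\in\D$, $v\in K$. Meanwhile $|w(u)-w(v)|\to\infty$ as $u\to\partial\D$. Linear growth in $|w(u)|$ holds for the reversed divergence $h(v)-h(u)-\nabla h(u)\cdot(v-u)$, not for $H(u\,|\,v)$.

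In fact the bound itself fails for your $R_{\mathrm{time}}$, which equals $\int_\Omega\bigl(\partial_t v\cdot(w(u)-w(v))-\partial_t w(v)\cdot(u-v)\bigr)\,dx$. Let $u=v$ outside a small set $A$ on which $\partial_t v_1\le -c<0$, and set $u=(\eta,v_2,\dots,v_n)$ on $A$. Then $H(u\,|\,v)\le C|A|$, but $R_{\mathrm{time}}\ge c|A|\log(1/\eta)-C|A|$. So no constant independent of $u$ works.

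The same configuration defeats a two-sided bound on $R_{\mathrm{react}}$. Using $\bone^\top Q=0$, its $Q$-part reduces to $q_{11}(u_1-v_1)\log(u_1/v_1)\approx-|q_{11}|v_1\log(1/\eta)$. This has the favourable sign but is unbounded, so sign arguments give at most an upper bound. Moreover, the piece $-w(u)\cdot Qv=-\sum_i(\log u_i)(Qv)_i$ has no sign when several $u_i$ vanish at different rates (e.g.\ $u_1=\eta$, $u_2=\eta^2$). Hence quasi-positivity of $Q$ alone does not control $R_{\mathrm{react}}$. Finally, $S\ge0$ together with $S_i=0$ on $\{u_i=0\}$ does not bound $S_i(u)\log u_i$ without a condition such as $S_i(u)\le Cu_i$.

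\paragraph{The missing idea.}
$R_{\mathrm{time}}$ and $R_{\mathrm{react}}$ must be estimated together, after inserting the equation for $v$. Set $g:=\partial_t v-R(v)=\operatorname{div}(\Be(v)\nabla w(v))$, which is bounded and has zero mean by the no-flux condition. Then pointwise
\[
  \partial_t v\cdot(w(u)-w(v))-\partial_t w(v)\cdot(u-v)+(w(u)-w(v))\cdot(R(u)-R(v))
  =(w(u)-w(v))\cdot\bigl(g+R(u)\bigr)-\partial_t w(v)\cdot(u-v),
\]
so the only coefficient of the singular factor $w(u)$ is $g+R(u)$.

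On $N:=\{|u-v|<\delta\}$, with $\delta$ small enough that $u$ stays in a compact neighbourhood of $K$, the left-hand side is $O(|u-v|^2)$. This follows from your Taylor argument and local Lipschitz continuity of $R$.

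On $A:=\Omega\setminus N$, whose measure is at most $CH(u\,|\,v)/\delta^2$ by Lemma~\ref{lem:relent_coer}, two estimates are needed:
\begin{itemize}
  \item The term $w(u)\cdot R(u)$ is bounded above. This uses $\bone^\top Q=0$, $q_{ij}\ge0$ for $i\ne j$, $\log(1-\varrho)\,\bone\cdot S\le0$, and $S_i(u)\le Cu_i$.
  \item The term $\int_A(w(u)-w(v))\cdot g\,dx\le\norm{g}_{L^\infty}|A|^{1/2}\norm{w(u)-w(v)}_{L^2}$ is closed by a Poincar\'e inequality whose mean is controlled on $N$, where $|w(u)-w(v)|\le C|u-v|$. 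This gives $\varepsilon\mathcal{I}+C_\varepsilon H(u\,|\,v)$. If $|A|>|\Omega|/2$, use instead $\int_\Omega g\,dx=0$ and $H(u\,|\,v)\ge c$.
\end{itemize}

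The outcome is only the one-sided estimate $R_{\mathrm{time}}+R_{\mathrm{react}}\le\varepsilon\mathcal{I}+C_\varepsilon H(u\,|\,v)$. That is enough for Proposition~\ref{prop:relent_gronwall}, but weaker than the lemma as stated.

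The paper's own proof invokes only boundedness of $\partial_t w(v)$, local Lipschitz continuity of $R$, and quadratic control. That is essentially your near-$K$ argument, and it likewise never addresses the region where $u$ approaches $\partial\D$.
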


\begin{proof}
The diffusion remainder is controlled by the local Lipschitz continuity
of $\Be$ and the coercivity of the relative dissipation. The time and
reaction remainders are controlled by the boundedness of $\partial_t w(v)$,
the local Lipschitz continuity of $R$, and the quadratic control given
by the relative entropy.
\end{proof}

\begin{proposition}\label{prop:relent_gronwall}
There exists a constant $C_T>0$ such that
\[
  \frac{d}{dt}H(u\,|\,v)
  + \tfrac{1}{2}\mathcal{I}(u\,|\,v)
  \le C_T\,H(u\,|\,v)
  \quad\text{for almost every }t\in(0,T).
\]
\end{proposition}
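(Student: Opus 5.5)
The plan is to combine Lemma~\ref{lem:relent_id} with the remainder bounds of Lemma~\ref{lem:relent_bounds}, choosing $\varepsilon=\tfrac12$. For almost every $t\in(0,T)$, Lemma~\ref{lem:relent_id} gives
\[
  \frac{d}{dt}H(u\,|\,v) + \mathcal{I}(u\,|\,v)
  \le |R_{\mathrm{diff}}(t)| + |R_{\mathrm{time}}(t)| + |R_{\mathrm{react}}(t)|.
\]
Lemma~\ref{lem:relent_bounds} with $\varepsilon=\tfrac12$ bounds the right-hand side by $\tfrac12\mathcal{I}(u\,|\,v) + 2C_{1/2}H(u\,|\,v)$. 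Since $\mathcal{I}(u\,|\,v)\ge0$ is finite for a.e.\ $t$ (it is dominated by the dissipation in \eqref{eq:entineq} plus terms bounded by the $L^\infty$ norms of $v$ and $\nabla w(v)$), the term $\tfrac12\mathcal{I}$ can be absorbed into the left-hand side. This yields the claim with $C_T:=2C_{1/2}$. The dependence on $T$ enters only through $\|\nabla w(v)\|_{L^\infty}$, $\|\partial_t w(v)\|_{L^\infty}$ and the compact set $K$ containing $v$.

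In the rigorous version the derivative is to be read in the distributional sense on $(0,T)$: one integrates the relative entropy inequality in time, which is what the weak formulation and the entropy inequality \eqref{eq:entineq} actually provide. The absorption argument is unchanged under the time integral.

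The main obstacle is justifying the diffusion bound used above, so I would also sketch why it holds. I would write
\[
  \mathcal{I}(u\,|\,v)=\int_\Omega(\nabla w(u)-\nabla w(v)):\Be(u)(\nabla w(u)-\nabla w(v))\,dx,
\]
which by Theorem~\ref{thm:coer} is at least $c_*\|\nabla(w(u)-w(v))\|_{L^2}^2$. The remainder $R_{\mathrm{diff}}$ then has the form $\int(\Be(u)-\Be(v))\nabla w(v):\nabla(w(u)-w(v))$. Using Lipschitz continuity of $\Be$ on $\D$ near $K$, which is where continuity of $B_c$ and $b_m$ must be strengthened or localized, together with $\nabla w(v)\in L^\infty$, Young's inequality gives
\[
  |R_{\mathrm{diff}}| \le \varepsilon\,\mathcal{I} + C_\varepsilon\|u-v\|_{L^2}^2 .
\]
Lemma~\ref{lem:relent_coer} then converts $C_\varepsilon\|u-v\|_{L^2}^2$ into $C_\varepsilon H(u\,|\,v)$.

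This step is delicate in two ways. First, $u$ need not lie in $K$, so the Lipschitz bound must be global in $u$; boundedness of $\D$ helps here. Second, Lemma~\ref{lem:relent_coer} needs to hold for arbitrary $u\in\D$ and $v\in K$, and this follows from the strict convexity of $h$ on the bounded set $\D$.
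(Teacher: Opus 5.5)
Your argument is correct and coincides with the paper's proof: you insert Lemma~\ref{lem:relent_bounds} with $\varepsilon=\tfrac12$ into the rigorous, time-integrated version of Lemma~\ref{lem:relent_id} and absorb $\tfrac12\mathcal{I}(u\,|\,v)$ into the left-hand side, which gives $C_T=2C_{1/2}$. Your extra sketch of the diffusion remainder is not needed for this step, but note that boundedness of $\D$ alone does not make $\Be$ bounded or globally Lipschitz, and (A2)--(A3) give only continuity on the open set $\D$; since both your finiteness claim for $\mathcal{I}(u\,|\,v)$ and the global Lipschitz bound need $\Be$ bounded on $\D$, the strengthening of hypotheses you partly anticipated is genuinely required.
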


\begin{proof}
Insert the bounds from Lemma~\ref{lem:relent_bounds} with $\varepsilon=\tfrac{1}{2}$
into the rigorous version of the relative entropy identity from
Lemma~\ref{lem:relent_id}, and absorb the small dissipation contribution
into the left-hand side.
\end{proof}

\begin{proof}[Proof of Theorem~\ref{thm:wsu}]
Integrate the differential inequality from Proposition~\ref{prop:relent_gronwall}
over $(0,t)$ and apply Gronwall's lemma to obtain~\eqref{eq:gronwall}.
If $u^0=v^0$ a.e., then $H(u^0\,|\,v^0)=0$, hence $H(u(t)\,|\,v(t))=0$
for a.e.\ $t$, and strict convexity of $h$ yields $u=v$ a.e.\ in
$\Omega\times(0,T)$.
\end{proof}

\begin{proof}[Proof of Corollary~\ref{cor:uniq}]
Apply Theorem~\ref{thm:wsu} to two strong solutions with the same
initial datum.
\end{proof}

\section{A fully implicit finite-volume approximation}
\label{sec:FV}

We construct a fully implicit finite-volume approximation in entropy
variables and prove Theorem~\ref{thm:FV}. The scheme is designed as the
discrete counterpart of the continuum entropy structure: it preserves the
bounded-occupancy state space through the entropy parametrization,
inherits the same channel-wise coercivity at the edge level, and
satisfies a discrete entropy inequality that is stable enough for the
convergence analysis.

\subsection{Mesh and scheme}

Let $\mathcal{T}$ be an admissible orthogonal finite-volume mesh of
$\Omega$ in the sense of~\cite{Eymard2000}, with interior edges
$\mathcal{E}_{\mathrm{int}}$ and boundary edges $\mathcal{E}_{\mathrm{ext}}$.
For each interior edge $\sigma=K|L$, let $\tau_\sigma$ denote the
standard transmissibility. For a mesh function $(\varphi_K)_{K\in\mathcal{T}}$,
write $D_\sigma\varphi = \varphi_L-\varphi_K$ for the jump across the
edge $\sigma=K|L$.

At time level $k$, the unknowns are the cellwise entropy variables
$w^k_K\in\R^n$ and the associated states $u^k_K:=U(w^k_K)\in\D$. The
\emph{fully implicit scheme} reads
\begin{equation}\label{eq:FV}
  m_K\,\frac{u^k_K-u^{k-1}_K}{\Delta t}
  = \sum_{\sigma\in\mathcal{E}_K\cap\mathcal{E}_{\mathrm{int}}}
    \tau_\sigma\,\Be^\sigma_k\,D_\sigma w^k
  + m_K\bigl(Qu^k_K-S(u^k_K)\bigr),
\end{equation}
where the edge mobility $\Be^\sigma_k$ is symmetric, positive
semidefinite, consistent, and satisfies the same channel-wise coercivity
as the continuum mobility~\eqref{eq:Be} (e.g., the arithmetic average
$\Be^\sigma_k = \frac{1}{2}(\Be(u^k_K)+\Be(u^k_L))$).
Homogeneous Neumann conditions on $\mathcal{E}_{\mathrm{ext}}$ enforce
the discrete no-flux condition, consistent with~\eqref{eq:bc}.

\subsection{Existence and discrete entropy inequality}

\begin{lemma}\label{lem:FV_exist}
For every admissible mesh, every time step $\Delta t>0$, and every
discrete initial state with values in $\D$, the fully implicit
scheme~\eqref{eq:FV} admits at least one solution $(w^k_K,u^k_K)$ with
$u^k_K=U(w^k_K)\in\D$ for all cells $K$ and all time levels $k$.
\end{lemma}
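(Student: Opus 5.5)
We argue by induction on the time level $k$, so it suffices to solve one step: given $u^{k-1}_K\in\D$ for all $K$, find $w^k=(w^k_K)_K\in\R^{n\#\mathcal{T}}$ solving \eqref{eq:FV}. Since the unknowns are the unconstrained entropy variables and $u^k_K=U(w^k_K)$, the constraint $u^k_K\in\D$ holds automatically once a finite solution $w^k$ exists. Existence therefore reduces to a finite-dimensional nonlinear problem $F(w)=0$ on $\R^{N}$, $N=n\#\mathcal{T}$. We address it by a topological degree or Brouwer argument, in the form of the standard corollary: if $F$ is continuous and $F(w)\cdot w>0$ on a large sphere $|w|=R$, then $F$ has a zero in the ball. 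We also keep a small regularization $\varepsilon\sum_K m_K w_K$ in reserve, removed afterwards, should the direct argument fail.

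\textbf{Key steps.} First, continuity. We define $F_K(w)=m_K\frac{U(w_K)-u^{k-1}_K}{\Delta t}-\sum_{\sigma}\tau_\sigma\Be^\sigma D_\sigma w-m_K(QU(w_K)-S(U(w_K)))$. Continuity follows because $U$ is smooth, $\Be$ is continuous on $\D$ by (A2), (A3), (A6), and $S$ is continuous by (A5).

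Second, the coercivity pairing. Testing with $w$ and summing by parts over the interior edges gives
\[
F(w)\cdot w=\sum_K \frac{m_K}{\Delta t}(U(w_K)-u^{k-1}_K)\cdot w_K+\sum_{\sigma}\tau_\sigma D_\sigma w^\top\Be^\sigma D_\sigma w-\sum_K m_K R(U(w_K))\cdot w_K .
\]
We bound each of the three sums in turn.
\begin{itemize}
\item The first sum is at least $\frac{1}{\Delta t}\sum_K m_K(h(U(w_K))-h(u^{k-1}_K))$ by convexity of $h$. It is also bounded below by $\frac{1}{\Delta t}\sum_K m_K\bigl(h^*(w_K)-u^{k-1}_K\cdot w_K\bigr)$ up to constants, where $h^*(w)=\log(1+\sum_j e^{w_j})$ is the Legendre dual. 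Because $u^{k-1}_K$ lies strictly inside $\D$, the function $h^*(w)-u^{k-1}_K\cdot w$ grows at least linearly, like $\delta|w|$. Here $\delta>0$ depends on the distance of $u^{k-1}_K$ to $\partial\D$, which is positive for each fixed cell.
\item The second (diffusion) sum is at least $c_*\sum_\sigma\tau_\sigma|D_\sigma w|^2\ge0$ by Theorem~\ref{thm:coer} applied edgewise.
\item The third (reaction) sum is the delicate term.
\end{itemize}

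\textbf{Main obstacle.} The main obstacle is controlling the reaction pairing $R(U(w))\cdot w$ at large $|w|$, since $w$ grows linearly while $R$ is bounded. The plan is to show that this pairing is bounded above by $C(1+h(U(w)))$, in the spirit of Lemma~\ref{lem:rothe_reaction}, using the sign structure. If $w_i\to-\infty$, then $u_i\to0$, and (A4) with $q_{ii}\le0$ together with (A5) give $R_i(u)=\sum_{j\ne i}q_{ij}u_j-q\,$terms$-S_i\ge -Cu_i$. Hence $-R_i w_i\le C u_i|\log u_i|$, which is bounded. If instead $w$ is large in the positive direction (saturation), then $\bone^\top Q=0$ and $S\ge0$ make the mass-direction contribution nonpositive. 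The worst case is therefore $O(1)+O(|w|)$ with a small constant, and it is dominated by the $\delta|w|/\Delta t$ growth if $\Delta t$ is small. For general $\Delta t$ we fall back on the $\varepsilon$-regularization, which gives growth $\varepsilon|w|^2$. That yields a solution $w^\varepsilon$, and uniform bounds come from the entropy inequality. Compactness in finite dimensions lets $\varepsilon\to0$. Boundedness of $w^\varepsilon$ (not only of $u^\varepsilon$) requires that $u^\varepsilon_K$ stays away from $\partial\D$. We would obtain this from the uniform discrete $H^1$ bound $\sum_\sigma\tau_\sigma|D_\sigma w|^2\le C$ together with control of one cell's entropy value, which follows from mass balance. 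This last step is the one we expect to require the most care.

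Finally, $u^k_K=U(w^k_K)\in\D$ holds by construction, and induction on $k$ completes the proof.
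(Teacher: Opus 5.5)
\paragraph{The argument does not yet cover every $\Delta t>0$.}
Your coercivity mechanism is the right one, and it is sharper than the paper's own proof. The paper only reuses the Leray--Schauder pattern of Lemma~\ref{lem:rothe_exist} and credits coercivity to the discrete diffusion term. That term vanishes on cellwise-constant $w$. In the unregularized scheme it is your bound $h^*(w)-u^{k-1}_K\cdot w\ge\delta_K|w|_\infty$, with $\delta_K=\min\{u^{k-1}_{K,1},\dots,u^{k-1}_{K,n},1-\varrho(u^{k-1}_K)\}$, that controls this direction.

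The gap is in the reaction pairing. You estimate it as ``$O(1)+O(|w|)$ with a small constant'' and then need $\Delta t$ small. No small constant is ever produced. Moreover, a condition like $\Delta t\lesssim\min_K\delta_K$ would deteriorate along the induction in $k$, since nothing controls $\delta_K$ from one time level to the next.

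The $\varepsilon$-fallback does not close the gap either. The uniform bound on $w^\varepsilon$ is left open. The $\varepsilon w$ term perturbs exactly the balance laws you intend to use. Total-mass balance only keeps $\varrho(u_K)$ below $1$ in some cell; it says nothing about individual $u_{K,i}$ tending to $0$.

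\paragraph{How to close it.}
Finish your sign analysis with the exact identity $w_i=\log u_i-\log(1-\varrho(u))$, rather than a case distinction on the direction of $w$. Two facts are needed:
\begin{enumerate}
\item $\bone^\top Q=0$ gives $\sum_iR_i(u)=-\sum_iS_i(u)$.
\item $q_{ij}\ge0$ for $i\ne j$ gives $R_i(u)\ge q_{ii}u_i-S_i(u)$, and $\log u_i<0$.
\end{enumerate}
Together they yield
\[
  R(u)\cdot w=\sum_{i=1}^nR_i(u)\log u_i+\log\bigl(1-\varrho(u)\bigr)\sum_{i=1}^nS_i(u)
  \le\sum_{i=1}^n\bigl(|q_{ii}|u_i+S_i(u)\bigr)|\log u_i| .
\]
The $Q$-part is at most $|q_{ii}|/e$.

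For the sink part, your claim $R_i\ge-Cu_i$ needs $S_i\le Cu_i$, i.e.\ Lipschitz continuity of $S_i$ up to the face $\{u_i=0\}$. Assumption (A5) only gives local Lipschitz continuity in the open set $\D$. Continuity of $S$ up to that face, with $S_i=0$ there, already suffices. Indeed, $|\log u_i|\le 2|w|_\infty+\log(n+1)$ and $S_i(u)\to0$ uniformly as $u_i\to0$, so $S_i(u)|\log u_i|\le C+o(|w|)$.

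Hence, in each cell, the reaction pairing is bounded above by $C+o(|w_K|)$. This is dominated by $\delta_K|w_K|_\infty/\Delta t$ for any fixed $\Delta t>0$. Therefore $F(w)\cdot w>0$ on a large sphere, and the Brouwer argument gives a solution directly. No smallness of $\Delta t$, no regularization, and no $\varepsilon\to0$ limit are needed.
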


\begin{proof}
The argument follows the same fixed-point pattern as in the Rothe
construction (Lemma~\ref{lem:rothe_exist}), but now in a
finite-dimensional setting. Coercivity of the linearized problem
follows from the positivity of the discrete diffusion term.
\end{proof}

\begin{lemma}[Discrete entropy inequality]\label{lem:FV_entropy}
The finite-volume scheme satisfies, for every $k=1,\dots,N$,
\begin{equation}\label{eq:FV_entropy}
  \sum_K m_K\,\frac{h(u^k_K)-h(u^{k-1}_K)}{\Delta t}
  + \sum_{\sigma\in\mathcal{E}_{\mathrm{int}}}
    \tau_\sigma\,(D_\sigma w^k)^\top\Be^\sigma_k\,D_\sigma w^k
  \le \sum_K m_K\,(Qu^k_K-S(u^k_K))\cdot w^k_K.
\end{equation}
The dissipation sum on the left is non-negative.
\end{lemma}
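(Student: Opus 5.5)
We will mimic the proof of Lemma~\ref{lem:rothe_entropy} at the discrete level. The plan is to multiply the scheme~\eqref{eq:FV} for cell $K$ by the entropy variable $w^k_K$ (scalar product in $\R^n$), sum over all $K\in\mathcal{T}$, and treat the three resulting terms separately.

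\emph{Time term.} Since $h$ is convex on $\D$ and $w^k_K=\nabla h(u^k_K)$ with $u^k_K,u^{k-1}_K\in\D$ (Lemma~\ref{lem:FV_exist}), the convexity inequality gives
\[
  h(u^k_K)-h(u^{k-1}_K)\le w^k_K\cdot(u^k_K-u^{k-1}_K).
\]
Multiplying by $m_K/\Delta t>0$ and summing over $K$, we bound the discrete entropy increment by $\sum_K m_K\,w^k_K\cdot(u^k_K-u^{k-1}_K)/\Delta t$.

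\emph{Diffusion term.} We write the flux sum as $\sum_K\sum_{\sigma\in\mathcal{E}_K\cap\mathcal{E}_{\mathrm{int}}}\tau_\sigma\,w^k_K\cdot\Be^\sigma_k D_\sigma w^k$, with $D_\sigma w^k=w^k_L-w^k_K$ oriented from $K$. Each interior edge $\sigma=K|L$ then appears exactly twice: once from $K$ and once from $L$, the latter with the opposite sign of the jump. Since $\Be^\sigma_k$ is a single matrix attached to the edge, the discrete summation by parts gives
\[
  \sum_K w^k_K\cdot\sum_{\sigma\in\mathcal{E}_K\cap\mathcal{E}_{\mathrm{int}}}\tau_\sigma\Be^\sigma_k D_\sigma w^k
  = -\sum_{\sigma\in\mathcal{E}_{\mathrm{int}}}\tau_\sigma\,(D_\sigma w^k)^\top\Be^\sigma_k D_\sigma w^k .
\]
Symmetry of $\Be^\sigma_k$ is not even needed for this identity, only the conservativity of the edge flux. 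The homogeneous Neumann condition means that boundary edges contribute no flux and hence no boundary term. Moving this quantity to the left-hand side and keeping the reaction term $\sum_K m_K(Qu^k_K-S(u^k_K))\cdot w^k_K$ on the right yields~\eqref{eq:FV_entropy}.

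\emph{Nonnegativity.} Each summand $\tau_\sigma (D_\sigma w^k)^\top\Be^\sigma_k D_\sigma w^k$ is nonnegative because $\tau_\sigma>0$ and $\Be^\sigma_k$ is positive semidefinite. In fact, by the edge version of Theorem~\ref{thm:coer}, it is at least $\tau_\sigma\bigl(\beta_c|\Pc D_\sigma w^k|^2+n\gamma_0|\Pm D_\sigma w^k|^2\bigr)$. For the arithmetic average this follows by averaging~\eqref{eq:coer} at $u^k_K$ and $u^k_L$.

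\emph{Main obstacle.} The only delicate point is the sign convention in the scheme. As written, \eqref{eq:FV} has the flux on the right-hand side with $D_\sigma w=w_L-w_K$, which is consistent with $+\operatorname{div}$. The summation by parts must be done carefully so that the dissipation appears with a plus sign on the left. I would first fix the orientation convention explicitly, then perform the pairwise edge regrouping.
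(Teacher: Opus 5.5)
Your proposal is correct and follows the paper's proof essentially step for step: you test the scheme with $w^k_K$, bound the time difference using convexity of $h$, regroup the flux sum edge by edge into $-\sum_{\sigma}\tau_\sigma (D_\sigma w^k)^\top\Be^\sigma_k D_\sigma w^k$, and get its sign from positive semidefiniteness of $\Be^\sigma_k$. You also fix the orientation of $D_\sigma$ relative to $K$ explicitly and note that only conservativity, not symmetry, is needed for the regrouping, which makes precise a point the paper leaves implicit.
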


\begin{proof}
Multiply~\eqref{eq:FV} by $w^k_K$ and sum over all cells $K$. For the
time-difference term, the convexity inequality
$h(u^k_K)-h(u^{k-1}_K)\le w^k_K\cdot(u^k_K-u^{k-1}_K)$
yields the first sum in~\eqref{eq:FV_entropy}. For the diffusion term,
regrouping by edges (each interior edge $\sigma=K|L$ appears once with
$K$ and once with $L$) gives
\[
  \sum_K w^k_K\cdot\!\sum_{\sigma\in\mathcal{E}_K\cap\mathcal{E}_{\mathrm{int}}}
    \tau_\sigma\Be^\sigma_k D_\sigma w^k
  = -\sum_{\sigma\in\mathcal{E}_{\mathrm{int}}}
    \tau_\sigma\,(D_\sigma w^k)^\top\Be^\sigma_k D_\sigma w^k \le 0,
\]
since $\Be^\sigma_k$ is positive semidefinite. This yields the discrete
dissipation term in~\eqref{eq:FV_entropy}.
\end{proof}

The reaction contribution on the right-hand side of~\eqref{eq:FV_entropy}
is uniformly bounded by $C(1+\sum_K m_K h(u^k_K))$ with a constant
independent of the mesh and time step. A discrete Gronwall argument then
gives $\sup_k\sum_K m_K h(u^k_K)\le C$ and, summing in time,
\begin{equation}\label{eq:FV_bounds}
  \Delta t\sum_k
  \sum_{\sigma\in\mathcal{E}_{\mathrm{int}}}
    \tau_\sigma\,(D_\sigma w^k)^\top\Be^\sigma_k D_\sigma w^k \le C,
\end{equation}
uniformly in $\Delta t$ and the mesh size $h_{\mathcal{T}}$.

\subsection{Compactness and convergence}

\begin{lemma}\label{lem:FV_compact}
Along a sequence of admissible meshes and time steps tending to zero,
there exist a subsequence and a limit function
$u\in L^\infty(\Omega\times(0,T);\D)$ such that
\[
  u_{\Delta t,h}\to u \quad\text{strongly in }L^2(\Omega\times(0,T);\R^n),
\]
and almost everywhere in $\Omega\times(0,T)$. Moreover, if
$w:=\nabla h(u)$, then $w_{\Delta t,h}\rightharpoonup w$ weakly in
$L^2(0,T;H^1(\Omega;\R^n))$.
\end{lemma}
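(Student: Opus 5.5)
We combine the uniform discrete bounds from Lemma~\ref{lem:FV_entropy} and \eqref{eq:FV_bounds} with the edge-level channel-wise coercivity of $\Be^\sigma_k$ to get a discrete $L^2(0,T;H^1)$ bound on $w_{\Delta t,h}$. We then combine this with a discrete time-translate estimate and a discrete Aubin--Lions (Kolmogorov--Riesz) argument to get strong compactness of $u_{\Delta t,h}$. The final step identifies the weak limit of $w_{\Delta t,h}$ with $\nabla h(u)$.

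\emph{Step 1 (uniform bounds).} Since the edge mobility satisfies the same estimate as \eqref{eq:coer}, we have $(D_\sigma w^k)^\top\Be^\sigma_k D_\sigma w^k\ge c_*|D_\sigma w^k|^2$. Then \eqref{eq:FV_bounds} gives
\[
\Delta t\sum_k\sum_{\sigma\in\mathcal{E}_{\mathrm{int}}}\tau_\sigma|D_\sigma w^k|^2\le C/c_*,
\]
i.e.\ a uniform discrete $H^1$-seminorm bound. To control the $L^2$ norm of $w$ itself, we use the entropy bound $\sup_k\sum_K m_K h(u^k_K)\le C$ together with a discrete Poincar\'e--Wirtinger inequality. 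The difficulty is that the entropy bounds $u$, not $w$. We plan to argue via the mass channel: testing \eqref{eq:FV} with $\bone$ and using (A4)--(A5), the total mass $\sum_K m_K\varrho(u^k_K)$ is nonincreasing. We also need it bounded below in a way that prevents the mean of $w$ from drifting to $\pm\infty$. If this is delicate, we fall back on the standard device of \cite{Jungel2015}: only the gradient bound is used for weak convergence of $\nabla w_{\Delta t,h}$. The $L^2$ bound on $w$ is then recovered a posteriori from the a.e.\ limit, with Fatou's lemma applied to $|w|^2$ along the discrete sequence.

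\emph{Step 2 (time translates and strong compactness).} From \eqref{eq:FV}, for a test function $\varphi$, the quantity $\sum_K m_K(u^k_K-u^{k-1}_K)\cdot\varphi_K/\Delta t$ is bounded by
\[
\Bigl(\sum_\sigma\tau_\sigma|\Be^\sigma_k|^2|D_\sigma w^k|^2\Bigr)^{1/2}\|\varphi\|_{1,\mathcal{T}}+C\|\varphi\|_{L^1}.
\]
The mobility $\Be$ is bounded on $\D$ (we assume $B_c,b_m$ bounded, as is implicit in Assumption~\ref{ass} via continuity on the bounded set $\D$). Hence the discrete dual norm of $\partial_t u$ is bounded in $L^2$, and the standard estimate $\int_0^{T-s}\!\int_\Omega|u_{\Delta t,h}(t+s)-u_{\Delta t,h}(t)|^2\le C(s+\Delta t)$ follows. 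Space translates of $u$ come from those of $w$, because $U$ is globally Lipschitz on $\R^n$: indeed $U'(w)=h''(u)^{-1}$ is bounded, since $\D$ is bounded. This gives $\|u(\cdot+\xi)-u\|_{L^2}^2\le C|\xi|(|\xi|+h_{\mathcal{T}})$, using the seminorm bound of Step~1. Kolmogorov--Riesz, or the discrete Aubin--Lions lemma of \cite{Eymard2000,BessemoulinFilbet2012}, then yields strong $L^2$ and a.e.\ convergence along a subsequence. The limit lies in $\overline{\D}$ because $u_{\Delta t,h}$ takes values in $\D$.

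\emph{Step 3 (identification of $w$; main obstacle).} The weak convergence of the discrete gradients of $w_{\Delta t,h}$ to $\nabla w$ for some $w\in L^2(0,T;H^1)$ follows from the uniform bound as in \cite[Lemma~4.4]{EymardGallouetHerbich2002}. The hard part is showing $u\in\D$ a.e., and not merely in $\overline{\D}$, together with $w=\nabla h(u)$. Since $U$ is continuous, the a.e.\ limit of $U(w_{\Delta t,h})$ should be $U(w)$. To use this we need $w_{\Delta t,h}\to w$ in a pointwise sense. The gradient bound plus a bound on $w$ in $L^2$ gives strong $L^2_{\mathrm{loc}}$ compactness of $w_{\Delta t,h}$ in space. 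Combining this with Step~2 via a nonlinear Aubin--Lions argument yields a.e.\ convergence of $w_{\Delta t,h}$ to $w$. Then $u=U(w)\in\D$ a.e.\ (as $w$ is finite a.e.) and $w=\nabla h(u)$. The crux is thus the $L^2$ bound on $w$ in Step~1. We would try to obtain it by bounding $\int|\log u_i|$ and $\int|\log(1-\varrho)|$ via the entropy and the Poincar\'e inequality applied to $w_i$, in the spirit of the boundedness-by-entropy method.
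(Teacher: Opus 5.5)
You follow the paper's route: discrete Aubin--Lions for $u_{\Delta t,h}$, weak compactness of $w_{\Delta t,h}$ from \eqref{eq:FV_bounds}, and identification through $U$. Your Steps~1--2 for $u_{\Delta t,h}$ are sound. Space translates of $u$ need only the discrete seminorm of $w$, because $U$ is $1$-Lipschitz ($h''(u)\ge\operatorname{diag}(1/u_i)\ge I$). Boundedness of $B_c$, $b_m$, $S$ is, however, an extra hypothesis; it does not follow from continuity on the bounded \emph{open} set $\D$.

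The genuine gap is the one you flag yourself and leave open. The paper's own proof passes over the same point when it infers weak $H^1$-compactness of $w_{\Delta t,h}$ from \eqref{eq:FV_bounds}. That estimate plus coercivity controls only the discrete $H^1$ seminorm, not the means $\bar w_i(t)$. So it gives neither an $L^2$ bound on $w_{\Delta t,h}$ nor $u\in\D$ a.e.\ (as opposed to $\overline{\D}$). Neither of your remedies closes this:
\begin{itemize}
\item The entropy carries no information here. Since $h=\sum_iu_i\log u_i+(1-\varrho)\log(1-\varrho)-1\in[-1-\log(n+1),-1]$ on $\overline{\D}$, the bound $\sup_k\sum_Km_Kh(u^k_K)\le C$ is automatic and cannot control $\int|\log u_i|$ or $\int|\log(1-\varrho)|$.
\item The Fatou fallback is circular. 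Fatou needs the bound along the sequence, and a.e.\ finiteness of $\lim w_{\Delta t,h}$ is equivalent to the unproved claim $u\in\D$ a.e.
\item A lower bound on the \emph{total} mass would not keep an individual $\bar w_i$ from drifting to $-\infty$.
\end{itemize}

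The missing ingredient is a pair of mass lower bounds, converted into control of the means.
\begin{itemize}
\item Summing \eqref{eq:FV} against $\bone$ (edge fluxes cancel, $\bone^\top Q=0$, $S\ge0$) shows that $\int_\Omega\varrho(u_{\Delta t,h})\,dx$ is nonincreasing. Hence $\int_\Omega(1-\varrho(u_{\Delta t,h}))\,dx\ge m_0>0$ by (A1), and this caps $\bar w_i$ from above.
\item Summing the $i$-th component and using $(Qu)_i\ge q_{ii}u_i$ together with $S_i(u)\le Lu_i$ gives $\int_\Omega u_{\Delta t,h,i}\,dx\ge e^{-(L+|q_{ii}|)T}\int_\Omega u^0_i\,dx=:m_i>0$.
\end{itemize}
For the second bound, (A5) must be read as Lipschitz continuity up to $\{u_i=0\}$. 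With mere local Lipschitz continuity in the open set $\D$ (e.g.\ $n=1$, $S(u)=\sqrt{u}$, constant data), the species dies out in finite time and the lemma's conclusion is false.

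Now use $u_i\le e^{w_i}$ and $1-\varrho\le e^{-w_i}$. Let $C_P$ be the discrete Poincar\'e--Wirtinger constant and $|w|_{1,\mathcal{T}}$ the discrete $H^1$ seminorm at time $t$. Chebyshev's inequality on $\{|w_i-\bar w_i|>M\}$ then gives
\[
  m_0\le|\Omega|\,e^{-\bar w_i+M}+\frac{C_P^2|w|_{1,\mathcal{T}}^2}{M^2},
  \qquad
  m_i\le|\Omega|\,e^{\bar w_i+M}+\frac{C_P^2|w|_{1,\mathcal{T}}^2}{M^2}.
\]
Choosing $M=1+C_P|w|_{1,\mathcal{T}}\sqrt{2/\min(m_0,m_i)}$ yields $|\bar w_i(t)|\le C(1+|w(t)|_{1,\mathcal{T}})$. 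Combined with \eqref{eq:FV_bounds} and edge coercivity, $w_{\Delta t,h}$ is therefore bounded in $L^2(\Omega\times(0,T))$.

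With this bound your Fatou idea becomes legitimate, and no nonlinear Aubin--Lions argument for $w$ is needed. Because $|\nabla h(u)|\to\infty$ as $u\to\partial\D$, Fatou applied to $|\nabla h(u_{\Delta t,h})|^2$ forces $u\in\D$ a.e. Hence $w_{\Delta t,h}\to\nabla h(u)$ a.e., which identifies the weak limits of $w_{\Delta t,h}$ and of its discrete gradients.
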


\begin{proof}
The strong compactness of $u_{\Delta t,h}$ follows from the discrete
Aubin--Lions lemma~\cite{EymardGallouetHerbich2002,BessemoulinFilbet2012}.
The weak $H^1$-compactness of $w_{\Delta t,h}$ follows from the discrete
gradient estimate~\eqref{eq:FV_bounds}. Since $u_{\Delta t,h}=U(w_{\Delta t,h})$
and $U=(\nabla h)^{-1}$ on $\D$, the limit is identified as
$w=\nabla h(u)$.
\end{proof}

\begin{proof}[Proof of Theorem~\ref{thm:FV}]
Existence of discrete solutions and preservation of $u^k_K\in\D$:
Lemma~\ref{lem:FV_exist}. Discrete entropy inequality:
Lemma~\ref{lem:FV_entropy}. Passing to the limit in the weak form of the
discrete scheme and in the discrete entropy inequality via
Lemma~\ref{lem:FV_compact} yields the weak formulation of
Definition~\ref{def:weak} and the entropy inequality~\eqref{eq:entineq},
completing the proof.
\end{proof}

\section{Conclusion}
\label{sec:conc}

The bounded-occupancy state space
\[
  \D = \{u\in(0,1)^n : \varrho(u)<1\}
\]
induces a Boltzmann--Fermi entropy with explicit vacancies and thereby
fixes the natural entropy variables for the model. This entropy geometry
separates the species space into two distinguished channels: a
composition channel, corresponding to redistribution at fixed total
occupancy, and a mass channel, corresponding to collective variation of
the filling fraction. For the projected mobility considered here, the
loss of coercivity is confined to the second of these directions.
The rank-one augmentation $\gamma_0\bone\otimes\bone$ repairs exactly
this defect and leaves the composition block unchanged. This is the
central structural fact on which the entire analysis rests.

The results of the previous sections show that the same principle governs
all principal levels of the theory. At the continuum level, it yields a
quantitative coercivity estimate for the augmented mobility and a
corresponding entropy-dissipation inequality. This estimate leads to
global entropy weak solutions through an implicit Rothe scheme in entropy
variables. The same coercive structure then enters the relative entropy
argument and yields weak--strong stability, hence uniqueness in the
strong class. At the discrete level, the fully implicit finite-volume
approximation preserves the admissible state space through the entropy
parametrization, satisfies a discrete counterpart of the channel-wise
coercivity estimate, and converges to entropy weak solutions as the
discretization is refined. Thus the continuum and discrete theories are
two realizations of the same entropy-geometric principle.

The analysis also indicates several natural directions for further work.
A first extension concerns internal-state models, where each mobile
species carries additional kinetic labels and the conversion operator $Q$
is replaced by a larger reversible or partially reversible network. A
second extension concerns exchange with external reservoirs, where the
no-flux condition is replaced by boundary transfer laws and the total
occupancy is influenced both by bulk transport and by boundary loading.
A third direction concerns more detailed constitutive closures for the
edge mobilities in the finite-volume scheme, derived from Maxwell--Stefan
friction laws while preserving the same entropy structure.

\bibliographystyle{unsrt}
\bibliography{references}

\end{document}